\documentclass[11pt]{article}
\usepackage{amssymb,amsfonts, amsmath,amsthm}
\usepackage[mathscr]{euscript}
\usepackage[utf8]{inputenc}
\usepackage{caption}
\usepackage{subcaption}
\usepackage{authblk}
\usepackage{color}
\usepackage{esvect}
\usepackage{empheq} 
\usepackage{float}
\usepackage{graphicx}
\usepackage{afterpage}
\usepackage{mwe}
\usepackage{comment}
\usepackage{mathtools}
\usepackage{xcolor}
\usepackage{epsfig,color,graphicx,graphics}
\usepackage[margin=1in]{geometry}
\usepackage{graphicx}

\numberwithin{equation}{section}
\allowdisplaybreaks
\newtheorem{thm}{Theorem}[section]
\newtheorem{defn}[thm]{Definition}
\newtheorem{cor}[thm]{Corollary}
\newtheorem{lemma}[thm]{Lemma}
\newtheorem{rmrk}[thm]{Remark}
\newtheorem{crl}[thm]{Corollary}

\newtheorem{exmp}[thm]{Example}

\newcommand{\e}{\varepsilon}
\newcommand{\R}{\mathbb{R}}
\newcommand{\N}{\mathbb{N}}

\newtheorem{prop}[thm]{Proposition}

\newcommand{\abs}[1]{\left\vert{#1}\right\vert}

\newcommand{\ba}{\begin{array}}
\newcommand{\ea}{\end{array}}

\newcommand{\bthm}{\begin{thm}}
\newcommand{\ethm}{\end{thm}}
\newcommand{\bstp}{\begin{stp}}
\newcommand{\estp}{\end{stp}}
\newcommand{\blemma}{\begin{lemma}}
\newcommand{\elemma}{\end{lemma}}
\newcommand{\bprop}{\begin{prop}}
\newcommand{\eprop}{\end{prop}}
\newcommand{\bpf}{\begin{pf}}
\newcommand{\epf}{\end{pf}}
\newcommand{\bdefn}{\begin{defn}}

\newcommand{\edefn}{\end{defn}}
\newcommand{\brk}{\begin{rmrk}}
\newcommand{\erk}{\end{rmrk}}
\newcommand{\bcrl}{\begin{crl}}
\newcommand{\ecrl}{\end{crl}}
\newcommand{\beg}{\begin{exmp}}
\newcommand{\eeg}{\end{exmp}}
\newcommand{\norm}[1]{\left\|#1\right\|}

\newcommand{\beqn}{\begin{equation}}
\newcommand{\eeqn}{\end{equation}}

\renewcommand{\leq}{\leqslant}
\renewcommand{\geq}{\geqslant}

\newcommand{\mS}{\mathcal{S}}

\newcommand{\beq}{\begin{equation}}
\newcommand{\eeq}{\end{equation}}
\newcommand{\bea}{\begin{eqnarray}}

\newcommand{\eea}{\end{eqnarray}}

\newcommand{\dive}{\mathrm{div}\,}

\def\({\left(}
\def\){\right)}

\def\ba{{\bar\alpha}}

\title{}
\author

\begin{document}
\Large
 \begin{center}
\Large{Asymptotic analysis of  the energy for a ferroelectric nematic.}\\ 

\hspace{10pt}

\large
Dmitry Golovaty$^1$ and Peter Sternberg$^2$ \\

\hspace{10pt}

\small  
$^1$) Department of Mathematics, The University of Akron, Akron, OH \\
dmitry@uakron.edu
\\

$^2$) Department of Mathematics, Indiana University, Bloomington, IN \\
sternber@iu.edu

\end{center}

\hspace{10pt}

\normalsize

\begin{abstract}
The variational model for a ferroelectric nematic bears close resemblance to the well-known energy model for micromagnetics. Despite this similarity, the two models operate in fundamentally distinct parameter regimes describing different physics. In this paper we establish that the ferroelectric nematic energy functional $\Gamma$-converges to the energy of a nematic with high elastic anisotropy.
\end{abstract}

\section{Introduction}

Ferroelectric nematic liquid crystals possess a spontaneous macroscopic
polarization ${\bf{P}}$, coupled to the underlying orientational order of the
nematic. Unlike ordinary nematics, distortions of this polar field generate
bound charge: in the bulk the charge density is proportional to
$-\operatorname{div}{\bf{P}}$, while at the boundary it is proportional to
${\bf{P}}\cdot{\boldsymbol{\nu}}$, where ${\boldsymbol{\nu}}$ denotes the outer unit normal to the sample.
Consequently, splay of ${\bf{P}}$ and a nonzero normal component of ${\bf{P}}$ at the
boundary carry an electrostatic cost. In the presence of mobile ions, this electrostatic interaction is mediated by a screened
potential satisfying a modified Helmholtz equation.

The goal of this paper is to identify the variational limit of this screened
electrostatic energy as the screening length tends to zero. To this end,
let $\Omega$ be a smooth bounded domain in $\R^3$ and, unless specified otherwise, assume that all quantities are nondimensional. Then for a vector polarization ${\bf{P}}\in H^1(\Omega;\mS^2)$ we consider the screened electric potential given by the solution $u_\e:\R^3\to\R$ to the problem
\begin{equation}
\e^2\Delta u-\alpha^2u=\left\{\begin{matrix} \dive {\bf{P}}&\text{in}\;\Omega\\ 0&\text{in}\;\R^3\setminus\overline{\Omega}\end{matrix}\right.\label{Helmholtz}
\end{equation}
subject to the transmission boundary conditions
\begin{equation}
[u]=0\quad\text{and}\quad\left[\frac{\partial u}{\partial {\boldsymbol{\nu}}}\right]=\,-\frac{1}{\e^2}{\bf{P}}\cdot{\boldsymbol{\nu}}\quad\text{along}\;\partial\Omega.\label{bc}
\end{equation}
Here $\e>0$ and $\alpha>0$ are given constants, ${\boldsymbol{\nu}}$ denotes the outer unit normal vector to $\partial\Omega$ and for any quantity $f$, the notation $[\,f\,]$ denotes the difference $f_+-f_-$ between the limiting value of $f$ from outside $\Omega$ and the limiting value from inside $\Omega$. Note that one can think of the parameter $\e/\alpha$ as a nondimensional screening length so that when $\e\to0$ the ferroelectric nematic is subject to strong screening.

For the energy associated with a ferroelectric nematic having polarization ${\bf{P}}\in H^1(\Omega;\mS^2)$ in the presence of a screened electric field $\nabla u_\e$, we take a sum of a nondimensionalized Frank energy and a nondimensionalized electrostatic energy, that is,
\begin{equation}
  E_\e({\bf{P}}):=E_F({\bf{P}})+E_{e\ell}(\phi)\int_{\Omega}\frac{1}{2}\abs{\nabla {\bf{P}}}^2\,dx+\frac{1}{2}\int_{\R^3}\e^2\abs{\nabla u_\e}^2+\alpha^2 u_\e^2\,dx,\label{noGL1}
\end{equation}
where $u_\e$ is required to satisfy \eqref{Helmholtz}-\eqref{bc}, and we have taken the `equal constants' version of the Frank energy for simplicity. 

After an integration by parts, one then finds that $E_\e$ can be rewritten as
\begin{equation}
  E_\e({\bf{P}})=
  \int_{\Omega}\frac{1}{2}\abs{\nabla {\bf{P}}}^2+\frac{1}{2}{\bf{P}}\cdot\nabla u_\e\,dx.\label{noGL}   
\end{equation}

We will also consider an energy allowing for a relaxation of the constraint $\abs{{\bf{P}}}=1$ via a Ginzburg-Landau penalty. For ${\bf{P}}\in H^1(\Omega;\R^3)$ it is given by
\begin{equation}
\tilde{E}_\e({\bf{P}}):=\int_{\Omega}\left\{\frac{1}{2}\abs{\nabla {\bf{P}}}^2+\frac{1}{4\eta^2}(\abs{{\bf{P}}}^2-1)^2 +\frac{1}{2}{\bf{P}}\cdot\nabla u_\e\right\}\,dx,\label{withGL}
\end{equation}
where $\eta\ll 1$ is a relaxation parameter which we will assume to be fixed in the remainder of this paper.

Our main result shows that, in the strongly screened limit
$\varepsilon\to0$, the nonlocal electrostatic interactions entering into $E_\e$ and $\tilde{E}_\e$ become local: adding a splay penalty and imposing a hard tangential boundary condition. We approach this asymptotic limit from the perspective of $\Gamma$-convergence, a variational convergence designed to ensure that limit of minimizers to a sequence of energies converge to a minimizer of the limiting energy, the $\Gamma$-limit. We prove the following:
\begin{thm}\label{GC}
The sequence of energies $\{E_\e\}$ defined for ${\bf{P}}\in H^1(\Omega;\mathbb{S}^2)$ $\Gamma$-converges in the weak topology on $H^1(\Omega;\mathbb{S}^2)$ to the limiting energy
\begin{equation}
E_0({\bf{P}}):=\left\{\begin{matrix} \int_{\Omega}\left\{\frac{1}{2}\abs{\nabla {\bf{P}}}^2+\frac{1}{2\alpha^2}(\dive {\bf{P}})^2\right\}\,dx&\text{if}\;{\bf{P}}\cdot{\boldsymbol{\nu}}=0\;\text{on}\;\partial\Omega\\
+\infty&\;\text{otherwise.}\end{matrix}\right.
\end{equation}
Similarly, the $\Gamma$-limit of the sequence $\{\tilde{E}_\e\}$ defined on $H^1(\Omega;\R^3)$ is
\begin{equation}
\tilde{E}_0({\bf{P}}):=\left\{\begin{matrix} \int_{\Omega}\left\{\frac{1}{2}\abs{\nabla {\bf{P}}}^2+\frac{1}{4\eta^2}(\abs{{\bf{P}}}^2-1)^2+\frac{1}{2\alpha^2}(\dive {\bf{P}})^2\right\}\,dx&\text{if}\;{\bf{P}}\cdot{\boldsymbol{\nu}}=0\;\text{on}\;\partial\Omega\\
+\infty&\;\text{otherwise.}\end{matrix}\right.
\end{equation}
\end{thm}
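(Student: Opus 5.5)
The argument rests on a dual (variational) description of the electrostatic part of the energy. Let $F_\e({\bf P}):=\frac12\int_{\R^3}\e^2\abs{\nabla u_\e}^2+\alpha^2u_\e^2\,dx$. Multiply \eqref{Helmholtz} by a test function $\varphi\in H^1(\R^3)$, integrate separately over $\Omega$ and over $\R^3\setminus\overline\Omega$, and use the jump conditions \eqref{bc}. The boundary term $-\int_{\partial\Omega}({\bf P}\cdot{\boldsymbol\nu})\varphi$ produced by the jump cancels the one coming from integrating $\int_\Omega \varphi\,\dive{\bf P}$ by parts. This shows that $u_\e$ is the unique (Lax--Milgram) solution in $H^1(\R^3)$ of
\begin{equation*}
\int_{\R^3}\left(\e^2\nabla u_\e\cdot\nabla\varphi+\alpha^2u_\e\varphi\right)dx=\int_\Omega {\bf P}\cdot\nabla\varphi\,dx\qquad\text{for all }\varphi\in H^1(\R^3).
\end{equation*}
Taking $\varphi=u_\e$ gives \eqref{noGL}. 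Since $u_\e$ is the maximizer of a strictly concave quadratic functional, we also obtain
\begin{equation*}
F_\e({\bf P})=\sup_{\varphi\in C_c^\infty(\R^3)}\left\{\int_\Omega{\bf P}\cdot\nabla\varphi\,dx-\frac12\int_{\R^3}\left(\e^2\abs{\nabla\varphi}^2+\alpha^2\varphi^2\right)dx\right\},
\end{equation*}
where the supremum over $C_c^\infty$ equals the one over $H^1$ by density. In particular, $F_\e({\bf P})$ is nondecreasing as $\e\downarrow 0$ and is bounded above by
\begin{equation*}
G({\bf P}):=\sup_{\varphi\in C_c^\infty(\R^3)}\left\{L_{\bf P}(\varphi)-\frac{\alpha^2}{2}\norm{\varphi}_{L^2(\R^3)}^2\right\},\qquad L_{\bf P}(\varphi):=\int_\Omega{\bf P}\cdot\nabla\varphi\,dx .
\end{equation*}

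Next we identify $G$. If $L_{\bf P}$ is bounded in the $L^2(\R^3)$ norm, then by Riesz it is represented by some $g\in L^2$, and $G({\bf P})=\frac{1}{2\alpha^2}\norm{g}^2_{L^2}$. Otherwise scaling $\varphi$ gives $G({\bf P})=+\infty$. For ${\bf P}\in H^1$ we have
\begin{equation*}
L_{\bf P}(\varphi)=-\int_\Omega\varphi\,\dive{\bf P}\,dx+\int_{\partial\Omega}({\bf P}\cdot{\boldsymbol\nu})\varphi\,d\mathcal H^2 .
\end{equation*}
If ${\bf P}\cdot{\boldsymbol\nu}=0$ on $\partial\Omega$, then $g=-\dive{\bf P}\,\chi_\Omega$ and $G({\bf P})=\frac{1}{2\alpha^2}\int_\Omega(\dive{\bf P})^2$.

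If ${\bf P}\cdot{\boldsymbol\nu}\not\equiv0$, we show $G=+\infty$. Fix $\psi\in C_c^\infty(\R^3)$ with $\int_{\partial\Omega}({\bf P}\cdot{\boldsymbol\nu})\psi\neq0$. Let $\chi_\delta$ be a cutoff equal to $1$ on $\partial\Omega$ and supported in a $\delta$-neighbourhood of $\partial\Omega$, and set $\varphi_\delta=\psi\chi_\delta$. Then the surface term stays fixed, while $\norm{\varphi_\delta}_{L^2}\to0$ and the bulk term tends to $0$. This shows $L_{\bf P}$ is unbounded in $L^2$. This boundary-concentration step is the only delicate point, and it is what produces the hard constraint ${\bf P}\cdot{\boldsymbol\nu}=0$ in $E_0$.

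With this, the $\Gamma$-convergence is short.

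\emph{Liminf.} Let ${\bf P}_\e\rightharpoonup{\bf P}$ weakly in $H^1$. By Rellich, ${\bf P}_\e\to{\bf P}$ strongly in $L^2(\Omega)$. For any fixed $\varphi\in C_c^\infty(\R^3)$,
\begin{equation*}
F_\e({\bf P}_\e)\geq L_{{\bf P}_\e}(\varphi)-\frac12\int\left(\e^2\abs{\nabla\varphi}^2+\alpha^2\varphi^2\right)dx\;\longrightarrow\; L_{\bf P}(\varphi)-\frac{\alpha^2}{2}\norm{\varphi}^2_{L^2}.
\end{equation*}
Taking the supremum over $\varphi$ gives $\liminf F_\e({\bf P}_\e)\geq G({\bf P})$. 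The Dirichlet term is weakly lower semicontinuous. The Ginzburg--Landau term converges, since ${\bf P}_\e\to{\bf P}$ in $L^4$ by the compact embedding $H^1\hookrightarrow L^4$ in three dimensions. The constraint ${\bf P}\in\mS^2$ passes to the limit a.e. Hence $\liminf E_\e({\bf P}_\e)\ge E_0({\bf P})$, and likewise for $\tilde E_\e$.

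\emph{Recovery sequence.} Take the constant sequence ${\bf P}_\e={\bf P}$. By the monotonicity noted above, $F_\e({\bf P})\leq G({\bf P})$, so $\limsup E_\e({\bf P})\leq E_0({\bf P})$. This is trivial when $E_0({\bf P})=+\infty$, and it holds for $\tilde E_\e$ in the same way.
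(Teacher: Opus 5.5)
Your proof is correct, and it takes a genuinely different route from the paper.

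The paper starts from the Green's function representation of $u_\e$ (Proposition~\ref{solnform}). It splits $\int_\Omega {\bf P}_\e\cdot\nabla u_\e\,dx$ into the bulk--bulk, surface--surface and bulk--surface interactions $I$, $II$, $III$ of \eqref{first}. These are controlled by:
\begin{itemize}
\item kernel positivity (Lemma~\ref{poslem});
\item the $\e^{-1/2}$ cross-term bound of Lemma~\ref{rootlem};
\item approximate-identity properties of $\mathcal{G}_\e$ on $\R^3$ and of $\e\,\mathcal{G}_\e$ on $\partial\Omega$;
\item a Gagliardo-seminorm estimate resting on the uniform $H^{1/2}(\partial\Omega)$ bound;
\item a case split according to the size of $\norm{{\bf P}_\e\cdot{\boldsymbol{\nu}}}_{L^2(\partial\Omega)}/\sqrt{\e}$.
\end{itemize}

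You instead write the electrostatic energy in dual form. This exhibits it as a supremum of affine functionals of ${\bf P}$ that are continuous under weak $L^2(\Omega)$ convergence and increase as $\e\downarrow 0$. Such a monotone family of lower semicontinuous functionals $\Gamma$-converges to its pointwise limit $G$. Hence the liminf and recovery steps become one-line arguments, and the real work is computing $G$; there your boundary-layer test functions $\psi\chi_\delta$ produce the hard constraint.

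What your route buys:
\begin{itemize}
\item It avoids the Green's function, the kernel and trace estimates, and the subcases altogether.
\item It uses only weak $L^2(\Omega)$ convergence of ${\bf P}_\e$ for the nonlocal term.
\item It needs no regularity of $\partial\Omega$ beyond what the trace and divergence theorems require.
\end{itemize}

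What the paper's route buys is quantitative information. It identifies the leading-order size $\frac{1}{2\alpha\e}\norm{{\bf P}\cdot{\boldsymbol{\nu}}}^2_{L^2(\partial\Omega)}$ of the surface self-interaction $II$. It also makes the bulk/surface structure of the screened interaction explicit. That is what one would need for a rescaled weak-anchoring limit or for higher-order asymptotics. Within your framework, taking $\delta$ comparable to $\e$ and optimizing the amplitude of $\psi\chi_\delta$ would give a lower bound of order $1/\e$, though not the sharp constant without further work.
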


\begin{rmrk}
 We note that if one begins with the more general form \eqref{genOF} for $E_F$ discussed below, then the limiting problem arising in the $\Gamma$-limit takes the form of augmenting the splay constant $K_1$ by $\frac{1}{2\alpha^2}$.   
\end{rmrk}

The utility of $\Gamma$-convergence is always reliant on an accompanying result on compactness of energy-bounded sequences in the topology of the $\Gamma$-convergence. In our setting, a uniform energy bound on a sequence, 
say $\{\mathcal{{\bf{P}}}_\e\}$, of the form
\[
E_\e(\mathcal{{\bf{P}}}_\e)<C_0\quad\text{or}\quad\tilde{E}_\e(\mathcal{{\bf{P}}}_\e)<C_0
\]
immediately implies a uniform bound on $\norm{\mathcal{{\bf{P}}}_\e}_{H^1(\Omega;\R^3)}$. Hence, such a sequence will always possess a subsequence weakly converging in $H^1(\R^3)$. A standard consequence is the following:
\begin{cor}
    Let $\{{\bf{P}}_\e\}$ denote a sequence of minimizers of $E_\e$ (resp. $\tilde{E}_\e$). Then a subsequence converges to a minimizer of $E_0$ (resp. $\tilde{E}_0$).
\end{cor}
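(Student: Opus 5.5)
The corollary follows from the usual combination of $\Gamma$-convergence (Theorem \ref{GC}) and equi-coercivity. The plan has three steps, carried out identically for $E_\e$ and $\tilde E_\e$.

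\emph{Step 1: a uniform energy bound for minimizers.} I first fix a competitor ${\bf Q}$ with $E_0({\bf Q})<\infty$, that is, ${\bf Q}\in H^1(\Omega;\mS^2)$ with ${\bf Q}\cdot\boldsymbol{\nu}=0$ on $\partial\Omega$. Such fields exist. On general $\partial\Omega$ one cannot simply extend a smooth unit tangent field, by the hairy ball obstruction. However, a tangent field on $\partial\Omega$ with finitely many point defects of the type $x/|x|$ is still admissible. Such a field lies in $H^{1/2}$ of the boundary, and its extension inside $\Omega$ with finitely many point singularities lies in $H^1$ in three dimensions. If one prefers to avoid this construction, note that if $\inf E_0=+\infty$ the statement is vacuous, because then every $H^1(\Omega;\mS^2)$ field is a minimizer of $E_0$.

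Let ${\bf Q}_\e$ be the recovery sequence for ${\bf Q}$ from the $\limsup$ part of Theorem \ref{GC}. Minimality then gives
\[
\limsup_{\e\to0}E_\e({\bf P}_\e)\le\limsup_{\e\to0}E_\e({\bf Q}_\e)\le E_0({\bf Q})<\infty .
\]
More generally, this inequality holds for every ${\bf Q}\in H^1(\Omega;\mS^2)$.

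\emph{Step 2: compactness.} By \eqref{noGL1}, the electrostatic contribution $\frac12\int_{\R^3}\e^2|\nabla u_\e|^2+\alpha^2u_\e^2\,dx$ is nonnegative. Hence $\frac12\int_\Omega|\nabla{\bf P}_\e|^2\le E_\e({\bf P}_\e)\le C_0$. Since $|{\bf P}_\e|=1$, we also have $\|{\bf P}_\e\|_{L^2}^2=|\Omega|$, so $\{{\bf P}_\e\}$ is bounded in $H^1$.

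For $\tilde E_\e$ the Dirichlet bound is obtained the same way. The $L^2$ bound comes from the Ginzburg--Landau term: $\int_\Omega(|{\bf P}_\e|^2-1)^2\le 4\eta^2C_0$ controls $\|{\bf P}_\e\|_{L^4}$, and therefore $\|{\bf P}_\e\|_{L^2}$.

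We may therefore extract a subsequence with ${\bf P}_\e\rightharpoonup{\bf P}_0$ weakly in $H^1$. By Rellich, the convergence is strong in $L^2$ and, after a further subsequence, pointwise a.e. In the $E_\e$ case this gives $|{\bf P}_0|=1$ a.e., so ${\bf P}_0\in H^1(\Omega;\mS^2)$ and the limit lies in the space on which the $\Gamma$-limit is defined.

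\emph{Step 3: minimality of the limit.} The $\liminf$ inequality of Theorem \ref{GC}, combined with Step 1 applied to an arbitrary ${\bf Q}$, gives
\[
E_0({\bf P}_0)\le\liminf_{\e\to0} E_\e({\bf P}_\e)\le\limsup_{\e\to0}E_\e({\bf Q}_\e)\le E_0({\bf Q}).
\]
Taking the infimum over ${\bf Q}$ shows that ${\bf P}_0$ minimizes $E_0$. In particular $E_0({\bf P}_0)<\infty$, so ${\bf P}_0\cdot\boldsymbol\nu=0$ on $\partial\Omega$. Taking ${\bf Q}={\bf P}_0$ additionally yields convergence of the minimal energies.

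The argument for $\tilde E_\e$ and $\tilde E_0$ is verbatim the same. The only step requiring genuine care is the existence of a finite-energy tangential competitor in Step 1, which I would settle by the point-defect construction described there.
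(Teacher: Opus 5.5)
Your argument is correct and is the same standard $\Gamma$-convergence-plus-equicoercivity reasoning the paper has in mind when it calls the corollary a standard consequence. You also supply a point the paper leaves implicit, namely the existence of a finite-energy tangential competitor, which is what gives the uniform energy bound on minimizers (for $\tilde{E}_0$ even ${\bf Q}\equiv 0$ would do); the one blemish is your fallback remark, since if $\inf E_0$ were $+\infty$ the conclusion would not be vacuous: you would still need an $H^1$-weakly convergent subsequence with no uniform $H^1$ bound available, though your point-defect construction makes the remark unnecessary.
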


The result provides a variational justification for phenomenological models
of ferroelectric nematics in which splay is assigned a large elastic cost and
the polarization is constrained, either softly or strongly, to be tangential
at the boundary. Such high-splay-penalty models arise naturally in the
description of domain structures, wall formation, and singular patterns in
ferroelectric nematic films. The present theorem shows that this local
description can be obtained as the asymptotic limit of a screened
electrostatic model. 

The limiting energy arising in Theorem \ref{GC} was mathematically investigated, e.g., in \cite{GSV19} and the corresponding energy-minimizing configurations were also observed experimentally and numerically in \cite{LavrentovichKumariLavrentovich2025,PhysRevResearch.6.043207}.

The model \eqref{Helmholtz}-\eqref{noGL} also bears a formal resemblance to
micromagnetics, where a constrained vector field is coupled to a field
generated by a Maxwell equation. However, the relevant asymptotic regime
is different. The distinction is not merely that the present model involves
a screened, rather than unscreened, field equation. In ferroelectric
nematics, the electrostatic cost associated with bound polarization charge
is large compared with the orientational elastic energy. Thus, in an
unscreened regime one would expect bounded-energy configurations to be
driven toward the charge-free constraints
\[
\operatorname{div} {\bf{P}} = 0 \quad \text{in } \Omega,
\qquad
{\bf{P}}\cdot {\boldsymbol{\nu}} = 0 \quad \text{on } \partial\Omega.
\]
In the screened regime considered here, this same mechanism produces,
after nondimensionalization, a finite local splay penalty in the limit
$\varepsilon \to 0$, together with the boundary constraint
${\bf{P}}\cdot{\boldsymbol{\nu}}=0$. The next section explains this relationship more explicitly
and places the present model in the context of existing ferroelectric-nematic
and micromagnetic models. The subsequent section is devoted to proving
Theorem~\ref{GC}.

\section{Physical motivation and relationship to micromagnetics.}

\subsection{Ferroelectric nematics and high-splay models}

A ferroelectric nematic liquid crystal \cite{ChenEtAl2020FerroelectricNematic} is a nematic liquid with a macroscopic spontaneous polarization ${\bf{P}}$. This polarization is locally parallel to the nematic director ${\bf n}$, which specifies the preferred local molecular orientation \cite{Virga1994VariationalTheories}. The polarization direction can be aligned by confining the material between two glass plates and/or applying an external electric field. The experiments demonstrate formation of domain walls with a shape that is dictated by the anisotropic surface interactions with the “easy axis” of the substrate and by the orientational elasticity of the ferroelectric nematic \cite{PhysRevResearch.6.043207,LavrentovichKumariLavrentovich2025}. Crucially, in this process the splay of ${\bf{P}}$ is energetically costly because it creates a bound space charge of bulk density  proportional to $\dive {\bf{P}}$ and thus increases the electrostatic energy.

Given that ferroelectric nematics have been synthesized very recently, their modeling is yet to be fully developed. The existing work \cite{PhysRevResearch.6.043207} has focused on adapting the well-known director-based approach to standard nematics, based on the Oseen-Frank energy,
\begin{align*}
\label{eq:main}
 E_{OF}({\bf n}):=\int_{\Omega}\frac{K_1}{2}({\rm div}\,{\bf n})^2+&\frac{K_2}{2}({{\bf n}\cdot\rm curl}\,{\bf n})^2+\frac{K_3}{2}\abs{{\bf n}\times{\rm curl}\,{\bf n}}^2,
\end{align*}
that decomposes the elastic energy density into the cost of splay, bend and twist. Because the polarization vector ${\bf{P}}$ tends to align with the director ${\bf{n}}$, one can replace ${\bf{n}}$ with ${\bf{P}}$ when modeling elastic interactions. To fix ideas, consider the set $\Omega\subset\R^3.$ Then for ${\bf{P}}:\Omega\to\R^3$ the (dimensional) elastic energy takes the form
\begin{align}
 E({\bf{P}}):=\int_{\Omega}\frac{K_1}{2}({\rm div}\,{\bf{P}})^2+&\frac{K_2}{2}({{\bf{P}}\cdot\rm curl}\,{\bf{P}})^2+\frac{K_3}{2}\abs{{\bf{P}}\times{\rm curl}\,{\bf{P}}}^2\\ \nonumber &+\frac{\alpha}{4}(\abs{{\bf{P}}}^2-P_0^2)^2\,dx+
\frac{\gamma}{2}\int_{\partial\Omega}({\bf{P}}\cdot{\boldsymbol{\nu}})^4\,dS. \nonumber 
\label{eq:main}
\end{align}
The electrostatic interactions are ignored but the associated high cost of splay is incorporated into the elastic energy density by setting $K_1\gg K_2,K_3.$ The energy is supplemented with the penultimate potential term fixing a preferred value of $P_0$ for the magnitude of the polarization vector. The final anchoring term appearing in \eqref{eq:main} also originates from electrostatics and it takes into account the propensity of polarization to lie in a tangential direction at an interface between different media.

By setting $K_2=K_3=K_1-M=K,$ one can arrive at the following simplified expression for the elastic energy
\begin{align*}
 E({\bf{P}}):=\int_{\Omega}\frac{M}{2}({\rm div}\,{\bf{P}})^2+\frac{K}{2}\abs{\nabla {\bf{P}}}^2&+\frac{\alpha}{4}(\abs{{\bf{P}}}^2-P_0^2)^2\,dx+
\frac{\gamma}{2}\int_{\partial\Omega}({\bf{P}}\cdot{\boldsymbol{\nu}})^4\,dS,   
\end{align*}
where $\alpha,\gamma\gg1$ and $M\gg K>0.$ This version of the model has been shown to capture the experimentally observed features of  ferroelectric nematics when $\Omega=\Omega_0\times[-h/2,h/2]$ with $\Omega_0\subset\R^2$ and $h\ll1$ so that $\Omega$ is a thin film \cite{PhysRevResearch.6.043207}. Briefly, by choosing the parameters of the problem appropriately and nondimensionalizing \cite{PhysRevResearch.6.043207}, we can write the energy $\mathcal{E}_\e$ through a two-component vector field $\mathcal{{\bf{P}}}=\big(\mathcal{{\bf{P}}}^{(1)},\mathcal{{\bf{P}}}^{(2)}\big)$ via
\begin{equation}
 \mathcal{E}_\xi(\mathcal{{\bf{P}}})\sim
\int_{\Omega_0}
    \frac{1}{2\alpha^2}\left({\rm div}\,\mathcal{{\bf{P}}}  \right)^2+\frac{\xi}{2}\abs{\nabla \mathcal{{\bf{P}}}} ^2
    +
    \frac{1}{4\xi}(\abs{\mathcal{{\bf{P}}}}^2-1)^2\,dx.\label{GSV}  
\end{equation}

The two-dimensional energy \eqref{GSV} penalizing splay over bend has been analyzed in \cite{GSV19}. The most salient observation in \cite{GSV19} is that, as the parameter $\xi\to 0$, the minimization problem \eqref{GSV}  approaches a sum of bulk splay cost and a wall cost given by
\begin{equation}
 \mathcal{E}_0(\mathcal{{\bf{P}}}):= \int_{\Omega_0}
    \frac{1}{2\alpha^2}\left({\rm div}\,\mathcal{{\bf{P}}}  \right)^2\,dx\,dy
    +\frac{1}{3}\int_{J_{\mathcal{{\bf{P}}}}}\abs{\mathcal{{\bf{P}}}_+-\mathcal{{\bf{P}}}_-}^3\,ds,\label{GSVgl}
\end{equation}
where $\mathcal{{\bf{P}}}$ has prescribed values on $\partial\Omega_0$ and satisfies $\abs{\mathcal{{\bf{P}}}}=1$ everywhere in $\Omega_0$. The symbol $J_{\mathcal{{\bf{P}}}}$ represents the elastic wall, that is, a curve across which $\mathcal{{\bf{P}}}$ jumps in order to save on the cost of splay and $\mathcal{{\bf{P}}}_+$ and $\mathcal{{\bf{P}}}_-$ denote the values of the polarity on either side of the wall. The minimizers of $\mathcal{E}_0$ for small $\xi$ were shown to be characterized by presence of point singularities and elastic walls. 

If, in addition, one were to impose the condition that $\alpha\ll1,$ then in the unscreened limit $\mathcal{E}_\xi$ can be conjectured to $\Gamma$-converge to the Aviles-Giga energy
\begin{equation}
    \label{eq:AG}
    E_{AG}=\int_\Omega\xi{\left|\nabla^2u\right|}^2+\frac{1}{\xi}{\left({\left|\nabla u\right|}^2-1\right)}^2
\end{equation}
for ${\bf{P}}=:-\nabla^\perp u$ since $\dive {\bf{P}}\to 0$ as $\alpha\to0$ (\cite{aviles1987mathematical}).

Explicitly constructing the critical points of the energy $\mathcal{E}_0$ and computing them numerically via gradient flows for $\mathcal{E}_\e$ reveals singular structures that can be seen in experiments hinting that the electrostatic effects are correctly captured by the reduced energy $E$. However the solid mathematical argument for rigorous reduction from the full model that correctly accounts for electrostatics has so far been lacking. This is the issue we address in the present paper in Theorem \ref{GC}.

\subsection{Screened electrostatics model}
The model accounting for electrostatics appears, for example, in \cite{LavrentovichKumariLavrentovich2025}, \cite{Selinger} and it consists of the (dimensional) Oseen-Frank energy
\begin{equation}
 E_F({\bf{P}})=\frac{1}{2}\int_{\Omega}K_1(\dive {\bf{P}})^2+K_2(\text{curl}\,{\bf{P}}\cdot {\bf{P}})^2+K_3\abs{\text{curl}\,{\bf{P}}\times  {\bf{P}}}^2\,dx,  \label{genOF} 
\end{equation}
for polarization, augmented by the interaction term \[\int_{\Omega}\frac{1}{2}{\bf{P}}\cdot\nabla u\,dx.\] The potential $u$ is assumed to satisfy the dimensional version of \eqref{Helmholtz}-\eqref{bc} given by
\begin{equation}
\Delta u-\frac{2}{\lambda_D^2}u=\left\{\begin{matrix} \frac{1}{\e_0}\dive {\bf{P}}&\text{in}\;\Omega\\ 0&\text{in}\;\R^3\setminus\overline{\Omega}\end{matrix}\right.\label{Helmholtz-dim}
\end{equation}
subject to the boundary conditions
\begin{equation}
[u]=0\quad\text{and}\quad\left[\frac{\partial u}{\partial \nu}\right]=\,-\frac{1}{\e_0}{\bf{P}}\cdot{\boldsymbol{\nu}}\quad\text{along}\;\partial\Omega.\label{bc-dim}
\end{equation}
Here $\lambda_D$ is the Debye length and $\e_0$ is permittivity of the free space. Upon nondimensionalization, and in the regime of equal elastic constants, this leads to the scaled variational problem for \eqref{noGL} with the constraint \eqref{Helmholtz}-\eqref{bc}. One can consider this energy with a more general Oseen-Frank term \eqref{genOF} highlighting the separate costs of splay, twist and bend, either with or without a Ginzburg-Landau penalty. Because our analysis would go through unchanged, provided the Frank constants are taken to satisfy the usual inequalities $K_1,K_2,K_3>0$ so as to ensure ellipticity of the integrand, we choose to work with the simplest choice of the energy functional given by \eqref{noGL}. 

\subsection{Relationship to micromagnetics}

The structure of the ferroelectric-nematic energy is reminiscent of
micromagnetics that have been widely studied in a variational context (see, e.g.,  \cite{GarciaCervera2004OneDimensionalWalls,10.1098/rspa.1997.0013,DeSimone1993EnergyMinimizers,DeSimoneKohnMullerOtto2002ReducedTheory,KohnSlastikov2005AnotherThinFilmLimit,Ignat2009GammaNeel,MoriniMuratovNovagaSlastikov2023TransverseWalls}). In micromagnetics, one studies a magnetization
${\mathbf m}:\Omega\to S^2$ coupled to its magnetostatic field. A typical energy has
the form
\[
E_{\mathrm{mm}}({\mathbf m})
=
d^2\int_\Omega |\nabla {\mathbf m}|^2\,dx
+
\int_\Omega \varphi({\mathbf m})\,dx
+
\frac{\mu_0}{2}\int_{\mathbb R^3}|{\mathbf H}_d|^2\,dx ,
\]
where the demagnetizing field ${\mathbf H}_d$ is curl-free and satisfies
\[
\operatorname{div}({\mathbf H}_d+M_s {\mathbf m}\chi_\Omega)=0
\qquad\text{in }\mathbb R^3.
\]
Writing ${\mathbf H}_d=-\nabla u$, this becomes
\[
\operatorname{div}(-\nabla u+M_s {\mathbf m}\chi_\Omega)=0
\qquad\text{in }\mathbb R^3.
\]

Thus, in both micromagnetics and the ferroelectric-nematic model, a
constrained vector field is coupled to a scalar potential generated by the
divergence of that vector field. This formal similarity should not obscure an important difference in the relevant asymptotic regimes. In micromagnetics, the unscreened magnetostatic interaction gives rise to a variety of limiting behaviors depending on the scaling and domain geometry: in some thin-film
regimes it reduces at leading order to an energetic penalty or a hard constraint on the normal component of magnetization \cite{10.1098/rspa.1997.0013,KohnSlastikov2005AnotherThinFilmLimit}, while
in other regimes the nonlocal contribution is retained in a reduced limit \cite{DeSimoneKohnMullerOtto2002ReducedTheory}.

For ferroelectric nematics the situation is different. The electrostatic field
is generated by bound polarization charge, whose bulk density is proportional
to $-\operatorname{div}{\bf P}$, together with surface charge proportional to
${\bf P}\cdot\boldsymbol{\nu}$. The electrostatic cost associated with these charges is large
relative to the orientational elastic energy. Therefore the distinction from
micromagnetics is not merely that the electrostatic potential in the present
model satisfies a screened Helmholtz equation rather than an unscreened
Poisson equation. In fact, the influence of the electrostatic cost is strongest in the unscreened regime. With $\alpha=0,$ one would expect bounded-energy configurations to be driven toward the charge-free constraints
\[
\operatorname{div}{\bf P}=0
\qquad\text{in }\Omega,
\qquad
{\bf P}\cdot\boldsymbol{\nu}=0
\qquad\text{on }\partial\Omega.
\]
We expect that in this case an analysis analogous to that in the present paper would lead in the $\e\to0$ limit to the Aviles-Giga energy \eqref{eq:AG}. Indeed, the electrostatic interaction in ferroelectric nematics acts more like a strong constraint against splay and normal polarization charge, in comparison to the similar magnetostatic term in the usual micromagnetic scaling.

The screened regime $\alpha>0$ studied in the present paper provides an intermediate
asymptotic mechanism. The Debye-type screening localizes the electrostatic
interaction at length scale $\varepsilon$. As $\varepsilon\to0$, the
bulk part of the screened interaction collapses to a local quadratic penalty
on $\operatorname{div}{\bf P}$, while the surface self-interaction still diverges
unless the normal component of ${\bf P}$ vanishes on the boundary. Thus the
limiting energy contains the local splay contribution
\[
\frac{1}{2\alpha^2}\int_\Omega(\operatorname{div}{\bf P})^2\,dx
\]
and imposes the hard boundary constraint
\[
{\bf P}\cdot\boldsymbol{\nu}=0\qquad\text{on }\partial\Omega.
\]

Consequently, although the model has the formal structure of a micromagnetic
energy, the limiting theory is different. The asymptotic regime relevant to
ferroelectric nematics leads not to a nonlocal magnetostatic energy, but to a
local high-splay Oseen--Frank-type energy together with a tangential boundary
condition. This is the sense in which the present model connects the
electrostatic description of ferroelectric nematics to the high-splay
phenomenology used in reduced Oseen--Frank-type models.

\section{Proof of Theorem \ref{GC}}

We first establish a solution formula for $u_\e$ in terms of integration against the Helmholtz Green's function. While this is presumably quite standard, we present it here since we were unable to find a suitable reference for this precise problem.

Necessarily a solution to \eqref{Helmholtz}-\eqref{bc}, must be considered in the weak sense since jumps in the derivatives are built into the formulation. Thus, we begin with the following result, phrased in terms of more general inhomogeneities for the problem:
\begin{equation}
\e^2\Delta u-\alpha^2u=\left\{\begin{matrix} f&\text{in}\;\Omega\\ 0&\text{in}\;\R^3\setminus\overline{\Omega}\end{matrix}\right.\label{genHelmholtz}
\end{equation}
subject to the boundary conditions
\begin{equation}
[u]=0\quad\text{and}\quad\left[\frac{\partial u}{\partial {\boldsymbol\nu}}\right]=\,{-}\frac{1}{\e^2}h\quad\text{along}\;\partial\Omega,\label{genbc}
\end{equation}
where $f\in L^2(\Omega)$ and $h\in L^2(\partial\Omega)$ are arbitrary functions.
\begin{prop}
Given any $f\in L^2(\Omega)$ and $h\in L^2(\partial\Omega)$, the weak formulation of
\eqref{genHelmholtz}-\eqref{genbc} is given by the condition that a function $u\in H^1(\R^3)$ satisfies
\begin{equation}
\int_{\R^3}\left\{\e^2\nabla u\cdot\nabla \varphi+\alpha^2u\varphi+\chi_{\Omega}f\varphi\right\}\,dx-\int_{\partial\Omega}h\,\varphi\,dS=0\label{weak}
\end{equation}
for any compactly supported, smooth function $\varphi:\R^3\to\R$.
That is, if $u$ satisfies \eqref{weak}, and $u$ is continuous and is sufficiently smooth up to $\partial\Omega$ from either side so as to allow for integration by parts, then $u$ satisfies \eqref{genHelmholtz}-\eqref{genbc}.

In particular, for any vector field ${\bf{P}}\in H^1(\Omega;\R^3)$ the weak formulation of \eqref{Helmholtz}-\eqref{bc} is given by the condition that a function $u\in H^1(\R^3)$ satisfies
\begin{equation}
\int_{\R^3}\left\{\e^2\nabla u\cdot\nabla \varphi+\alpha^2u\varphi+\chi_{\Omega}(\dive {\bf{P}})\varphi\right\}\,dx-\int_{\partial\Omega}{\bf{P}}\cdot{\bf{{\boldsymbol\nu}}}\,\varphi\,dS=0\label{weakp}
\end{equation}
for any compactly supported, smooth function $\varphi:\R^3\to\R$.
\end{prop}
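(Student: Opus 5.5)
The plan is to verify the claim directly by integrating by parts separately on $\Omega$ and on $\R^3\setminus\overline{\Omega}$. Then we compare the result with \eqref{weak}, using that test functions $\varphi$ are arbitrary.

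\emph{Consistency of \eqref{weak}.} Suppose $u$ is continuous across $\partial\Omega$ and smooth up to $\partial\Omega$ from each side, and write $u_\pm$ for its restrictions. For $\varphi\in C_c^\infty(\R^3)$, Green's formula on $\Omega$ gives
\[
\int_\Omega \e^2\nabla u\cdot\nabla\varphi\,dx=-\int_\Omega \e^2\Delta u\,\varphi\,dx+\int_{\partial\Omega}\e^2\frac{\partial u_-}{\partial{\boldsymbol\nu}}\varphi\,dS .
\]
On the exterior region the outward normal is $-{\boldsymbol\nu}$, and $\varphi$ has compact support, so there are no contributions at infinity. This gives
\[
\int_{\R^3\setminus\overline\Omega}\e^2\nabla u\cdot\nabla\varphi\,dx=-\int_{\R^3\setminus\overline\Omega}\e^2\Delta u\,\varphi\,dx-\int_{\partial\Omega}\e^2\frac{\partial u_+}{\partial{\boldsymbol\nu}}\varphi\,dS .
\]
Substituting both into \eqref{weak} yields
\[
\int_{\R^3}\bigl(-\e^2\Delta u+\alpha^2u+\chi_\Omega f\bigr)\varphi\,dx-\int_{\partial\Omega}\Bigl(\e^2\Bigl[\frac{\partial u}{\partial{\boldsymbol\nu}}\Bigr]+h\Bigr)\varphi\,dS=0 .
\]

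\emph{Recovering the PDE and jump condition.} First take $\varphi$ supported in $\Omega$, then in $\R^3\setminus\overline\Omega$. By the fundamental lemma of the calculus of variations, the volume integrand vanishes a.e. in each region, which is exactly \eqref{genHelmholtz}. The volume term then drops out for every $\varphi$, leaving $\int_{\partial\Omega}(\e^2[\partial_{\boldsymbol\nu}u]+h)\varphi\,dS=0$. Traces of $C_c^\infty(\R^3)$ functions are dense in $L^2(\partial\Omega)$ because $\partial\Omega$ is smooth, so $\e^2[\partial_{\boldsymbol\nu}u]=-h$, which is \eqref{genbc}. The condition $[u]=0$ is built into $u\in H^1(\R^3)$, since the interior and exterior traces of an $H^1$ function agree. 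The argument also runs in reverse: a piecewise smooth solution of \eqref{genHelmholtz}--\eqref{genbc} satisfies \eqref{weak}, which justifies calling it the weak formulation. Every term in \eqref{weak} is well defined for $u\in H^1(\R^3)$, $f\in L^2(\Omega)$, $h\in L^2(\partial\Omega)$, so no further regularity is needed for the formulation itself.

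\emph{Specialization to \eqref{weakp}.} Set $f=\dive{\bf P}\in L^2(\Omega)$ and $h={\bf P}\cdot{\boldsymbol\nu}$. For ${\bf P}\in H^1(\Omega;\R^3)$, the trace theorem gives ${\bf P}|_{\partial\Omega}\in H^{1/2}(\partial\Omega)\subset L^2(\partial\Omega)$, so $h\in L^2(\partial\Omega)$, and \eqref{bc} is \eqref{genbc} with this $h$. Hence \eqref{weakp} follows immediately from \eqref{weak}.

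The only point requiring care is the sign bookkeeping: the jump $[\cdot]$ is taken as outside minus inside, and the exterior boundary term carries the normal $-{\boldsymbol\nu}$. These two conventions must combine so that the surface term appears with the minus sign shown in \eqref{weak}. The rest is routine.
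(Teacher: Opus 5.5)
Your proposal is correct and follows essentially the same route as the paper. Both integrate by parts separately on $\Omega$ and on $\R^3\setminus\overline{\Omega}$, use test functions supported away from $\partial\Omega$ to recover \eqref{genHelmholtz}, and then use test functions meeting $\partial\Omega$ to extract $\e^2\bigl[\partial u/\partial{\boldsymbol\nu}\bigr]=-h$. Your additional remarks only make explicit what the paper leaves implicit: the density of traces in $L^2(\partial\Omega)$ (where the paper localizes $\varphi$ near boundary points), $[u]=0$ from $u\in H^1(\R^3)$, and ${\bf P}\cdot{\boldsymbol\nu}\in L^2(\partial\Omega)$ via the trace theorem.
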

\begin{proof}
Taking arbitrary $\varphi$ supported off of $\partial\Omega$ in \eqref{weak}, an integration by parts immediately leads to \eqref{genHelmholtz}. Then taking $\varphi$ to be supported in a ball centered at a boundary point of $\partial\Omega$, one invokes \eqref{genHelmholtz} after an integration by parts to find that
\[
\int_{\partial\Omega}\e^2\left(\frac{\partial u_-}{\partial {\boldsymbol\nu}}-\frac{\partial u_+}{\partial {\boldsymbol{\nu}}}\right)\varphi\,dS-\int_{\partial\Omega}h\,\varphi\,dS=0
\]
for all such $\varphi$, where $\frac{\partial u_-}{\partial {\boldsymbol{\nu}}}$ denotes the normal derivative in the direction of ${\boldsymbol{\nu}}$ from inside $\Omega$ and $\frac{\partial u_+}{\partial {\boldsymbol{\nu}}}$ denotes the normal derivative in the direction of ${\boldsymbol{\nu}}$ from outside $\Omega$. The jump condition \eqref{genbc} on normal derivatives follows.
\end{proof}

We now recall the fundamental solution in three dimensions associated with the Helmholtz operator $\e^2\Delta \cdot -\alpha^2\cdot$. It is given by
\begin{equation}
\mathcal{G}_\e(x,y):=G_\e(\abs{x-y})\quad\text{where}\; G_\e(r):=\frac{1}{4\pi\e^2r}e^{-\frac{\alpha}{\e}r}.\label{Green}
\end{equation}
As is well-known, $\mathcal{G}_\e$ then satisfies the property
\begin{equation}
\e^2\Delta \mathcal{G}_\e(x,y) -\alpha^2\mathcal{G}_\e(x,y)=\,-\delta(x-y),\label{delta}
\end{equation}
cf. e.g. \cite{Zauderer},  pg. 324.
Here we choose to emphasize the dependence on $\e$ but not $\alpha$ in this notation since we will be interested in the $\e\to 0$ limit of our energy.
Then we have:
\begin{prop}\label{solnform}
Let $f\in L^2(\Omega)$. Then the solution to \eqref{genHelmholtz}-\eqref{genbc} is given by
\begin{equation}
u_\e(x)=-\int_{\Omega}\mathcal{G}_\e(x,y)\,f(y)\,dy+\int_{\partial\Omega} \mathcal{G}_\e(x,y)\,h(y)\,dS_y\label{usoln}
\end{equation}
in the sense that $u_\e$ satisfies \eqref{weak} for any compactly supported smooth function $\varphi:\R^3\to\R$.
\end{prop}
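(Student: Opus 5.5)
We verify directly that the function $u_\e$ defined by \eqref{usoln} belongs to $H^1(\R^3)$ and satisfies \eqref{weak}. The argument is a duality computation: test against $\varphi$, exchange the order of integration by Fubini, and use the fundamental-solution identity \eqref{delta} in its classical integrated form,
\begin{equation*}
\int_{\R^3}\mathcal{G}_\e(x,y)\,\big(\e^2\Delta\varphi(x)-\alpha^2\varphi(x)\big)\,dx=-\varphi(y)\quad\text{for every }y\in\R^3,
\end{equation*}
valid for any compactly supported smooth $\varphi$.

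\textbf{Step 1 (regularity).} Since $G_\e(r)\sim 1/(4\pi\e^2 r)$ near $0$ and decays exponentially at infinity, both $\mathcal{G}_\e$ and $\nabla_x\mathcal{G}_\e$ are integrable in $x$, uniformly in $y$: $\abs{\nabla_x\mathcal{G}_\e}\lesssim r^{-2}e^{-\alpha r/\e}$, which lies in $L^1(\R^3)$.
\begin{itemize}
\item For the volume term, Young's inequality (convolution of an $L^1$ kernel with $\chi_\Omega f\in L^2$) shows that it and its gradient lie in $L^2(\R^3)$.
\item For the single-layer term $\int_{\partial\Omega}\mathcal{G}_\e(x,y)h(y)\,dS_y$, $L^2$-membership of the function itself follows from Cauchy--Schwarz together with $\sup_x\int_{\partial\Omega}\abs{\mathcal{G}_\e(x,y)}\,dS_y<\infty$ (smooth compact surface, $1/r$ kernel).
\item The gradient of the single-layer term is more delicate, since $r^{-2}$ is not uniformly integrable on a surface. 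I would appeal to the standard fact that the single-layer potential of the Laplacian maps $H^{-1/2}(\partial\Omega)$ into $H^1_{loc}$, and hence maps $L^2(\partial\Omega)$ there. The kernel $\mathcal{G}_\e$ differs from $\frac{1}{4\pi\e^2\abs{x-y}}$ by a Lipschitz, exponentially decaying term, so the difference contributes a harmless smooth piece. Exponential decay at infinity then gives global $H^1$.
\end{itemize}
This step is the main obstacle. If citing the layer-potential result is deemed insufficient, the alternative is a duality argument: show that $\varphi\mapsto\int_{\partial\Omega}h\varphi\,dS$ is a bounded functional on $H^1(\R^3)$ by the trace theorem, and let $w\in H^1$ be the Lax--Milgram solution of the corresponding coercive problem. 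Then identify $w$ with the boundary integral by testing against mollified data; the identification uses exactly the computation of Step 2.

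\textbf{Step 2 (weak equation).} Given the regularity, integrate by parts in $x$ to move all derivatives onto $\varphi$:
\begin{equation*}
\int_{\R^3}\e^2\nabla u_\e\cdot\nabla\varphi+\alpha^2u_\e\varphi\,dx=-\int_{\R^3}u_\e\,\big(\e^2\Delta\varphi-\alpha^2\varphi\big)\,dx.
\end{equation*}
No boundary terms arise, because $u_\e\in H^1$ and $\varphi$ has compact support. Substitute \eqref{usoln} and apply Fubini; this is justified because $\mathcal{G}_\e\in L^1_{loc}$, $\varphi$ is bounded and compactly supported, $f\in L^1(\Omega)$ and $h\in L^1(\partial\Omega)$. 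The integrated identity \eqref{delta} then turns the volume term into $-\int_\Omega f\varphi\,dy$ and the boundary term into $+\int_{\partial\Omega}h\varphi\,dS_y$. Rearranging gives exactly \eqref{weak}.

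\textbf{Specialization.} Applying the result with $f=\dive{\bf P}\in L^2(\Omega)$ and $h={\bf P}\cdot{\boldsymbol\nu}\in L^2(\partial\Omega)$ (the latter by the trace theorem) yields \eqref{weakp}. Uniqueness, if wanted, follows from coercivity: taking $\varphi=u$ by density in \eqref{weak} for the difference of two solutions forces that difference to vanish.
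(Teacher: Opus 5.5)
Your argument is correct, but it is organized differently from the paper's.

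The paper integrates by parts only once. It differentiates \eqref{usoln} under the integral sign off $\partial\Omega$ to get \eqref{gradu}, splits the $x$-integral into $\Omega$ and $\Omega^c$, and uses \eqref{delta} on the volume term. For the single-layer term it excises balls $B_\delta(y)$, $y\in\partial\Omega$, and evaluates the flux of $\e^2\nabla_x\mathcal{G}_\e$ through $\partial B_\delta(y)$ as $\delta\to0$; that flux is where $\int_{\partial\Omega}h\varphi\,dS$ comes from.

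You instead move both derivatives onto $\varphi$ and invoke $\int_{\R^3}\mathcal{G}_\e(x,y)(\e^2\Delta\varphi-\alpha^2\varphi)(x)\,dx=-\varphi(y)$ for \emph{every} $y$. The standard proof of that identity is exactly the paper's small-sphere computation. With it, the volume and surface terms are handled identically, and there is no $\Omega/\Omega^c$ splitting and no cancellation of boundary integrals along $\partial\Omega$ to track.

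What this costs you is that you must know $u_\e\in H^1(\R^3)$ before rewriting the left side of \eqref{weak}. The paper uses the same fact tacitly when it treats \eqref{gradu} as the weak gradient of $u_\e$ across $\partial\Omega$, so your Step 1 makes explicit a point the paper passes over.

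Of your two options in Step 1, the Lax--Milgram one is the most self-contained, and it can be closed cleanly. Your Step 2 computation needs only $u_\e\in L^1_{loc}$, so the single-layer term and the Lax--Milgram solution $w$ satisfy the same identity against every test function. Their difference $v$ lies in $L^2(\R^3)$ by your Schur-type bound and obeys $\int_{\R^3}v\,(\e^2\Delta\varphi-\alpha^2\varphi)\,dx=0$ for all $\varphi$. The Fourier transform then forces $v=0$, which gives the $H^1$ regularity and uniqueness at once.

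One small slip: the difference kernel $(e^{-\alpha r/\e}-1)/(4\pi\e^2 r)$ is globally Lipschitz but decays only like $1/r$, not exponentially. This is harmless, since the global $H^1$ bound comes from the exponential decay of $\mathcal{G}_\e$ itself away from $\partial\Omega$.
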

\begin{proof}
Given that $\nabla \mathcal{G}_\e$ has an integrable singularity in three dimensions, it follows that 
\begin{equation}
\nabla u_\e(x)=-\int_{\Omega}\nabla_x \mathcal{G}_\e(x,y)\,f(y)\,dy
+\int_{\partial \Omega}\nabla_x \mathcal{G}_\e(x,y))h(y)\,dS_y
\label{gradu}
\end{equation}
for any $x\not\in\partial\Omega$. Therefore, we may use \eqref{gradu} to write the first two terms on the left-hand side of \eqref{weak} as
\begin{eqnarray}
&&\int_{\R^3}\bigg\{\e^2\nabla u_\e(x)\cdot\nabla \varphi(x)+\alpha^2u_\e(x)\varphi(x)\bigg\}\,dx=\nonumber\\&&
-\int_{\Omega}\int_{\Omega}\bigg\{\e^2\nabla_x \mathcal{G}_\e(x,y)\cdot\nabla\varphi(x)+\alpha^2 \mathcal{G}_\e(x,y)\varphi(x)\bigg\}f(y)\,dy\,dx\nonumber
\\&&+\int_{\Omega}\int_{\partial \Omega}
\bigg\{\e^2\nabla_x \mathcal{G}_\e(x,y)\cdot\nabla\varphi(x)
+\alpha^2\mathcal{G}_\e(x,y)\varphi(x)\bigg\}h(y)\,dS_y\,dx\nonumber\\
&&
-\int_{\Omega^c}\int_{\Omega}\bigg\{\e^2\nabla_x \mathcal{G}_\e(x,y)\cdot\nabla\varphi(x)+\alpha^2 \mathcal{G}_\e(x,y)\varphi(x)\bigg\}f(y)\,dy\,dx\nonumber\\
&&+\int_{\Omega^c}\int_{\partial \Omega}
\bigg\{\e^2\nabla_x \mathcal{G}_\e(x,y)\cdot\nabla\varphi(x)
+\alpha^2 \mathcal{G}_\e(x,y)\varphi(x)\bigg\}h(y)\,dS_y\,dx.\nonumber\\
\label{pre}
\end{eqnarray}
Next we apply Fubini's Theorem to switch the order of integration, integrate by parts and apply \eqref{delta} to the second and fourth lines of \eqref{pre} to obtain
\begin{eqnarray*}
&&\int_{\R^3}\bigg\{\e^2\nabla u_\e(x)\cdot\nabla \varphi(x)+\alpha^2u_\e(x)\varphi(x)\bigg\}\,dx=
-\int_{\Omega}\varphi(x)f(x)\,dx\\
&&+\int_{\partial \Omega}\int_{\Omega}
\left\{\e^2\nabla_x \mathcal{G}_\e(x,y)\cdot\nabla\varphi(x)
+\alpha^2\mathcal{G}_\e(x,y)\varphi(x)\right\}h(y)\,dx\,dS_y\\
&&+\int_{\partial \Omega}\int_{\Omega^c}
\left\{\e^2\nabla_x \mathcal{G}_\e(x,y)\cdot\nabla\varphi(x)
+\alpha^2 \mathcal{G}_\e(x,y)\varphi(x)\right\}h(y)\,dx\,dS_y.\\
\end{eqnarray*}
Here we note that the two new boundary integrals arising from the integration by parts cancel each other out.

Then, to handle the last two integrals, we excise a ball of radius  $\delta$ centered on $\partial \Omega$, integrate by parts, and again appeal to \eqref{delta} to find that
\begin{eqnarray}
&&\int_{\R^3}\bigg\{\e^2\nabla u_\e(x)\cdot\nabla \varphi(x)+\alpha^2u_\e(x)\varphi(x)+
\varphi(x)f(x)\chi_{\Omega}(x)\bigg\}\,dx=\nonumber\\
&&\lim_{\delta\to 0}\bigg[\int_{\partial \Omega}\int_{\Omega\setminus B_\delta(y)}
\left\{\e^2\nabla_x \mathcal{G}_\e(x,y)\cdot\nabla\varphi(x)
+\alpha^2 \mathcal{G}_\e(x,y)\varphi(x)\right\}h(y)\,dx\,dS_y\nonumber\\
&&+\int_{\partial \Omega}\int_{\Omega^c\setminus B_\delta(y)}
\left\{\e^2\nabla_x \mathcal{G}_\e(x,y)\cdot\nabla\varphi(x)
+\alpha^2 \mathcal{G}_\e(x,y)\varphi(x)\right\}h(y)\,dx\,dS_y\bigg]\nonumber\\
&&=\,-\e^2\lim_{\delta\to 0}\int_{\partial\Omega}\int_{\partial B_\delta(y)}\varphi(x)\nabla_x \mathcal{G}_\e\cdot{\boldsymbol{\nu}}_x\,h(y)\,dS_x\,dS_y,\label{almost}
\end{eqnarray}
since the boundary integrals along $\partial\Omega\setminus B_\delta(y)$ and $\partial\Omega^c\setminus B_\delta(y)$ cancel. In the last integral, $\nu_x$ denotes the outer unit normal to the ball $B_\delta(y)$.
Finally, in light of \eqref{Green}, we calculate that for each $y\in\partial\Omega$ one has 
\[\nabla_x \mathcal{G}_\e(x,y)\cdot{\boldsymbol{\nu}}_x=G_\e'(\delta)=\,-\bigg[\frac{\alpha}{\e}\frac{1}{4\pi\e^2\delta} +\frac{1}{4\pi\e^2\delta^2}\bigg]e^{-\frac{\alpha}{\e}\delta}
\quad
\text{for}\; x\in\partial B_\delta(y).
\]
Hence,
\[
\e^2\lim_{\delta\to 0}\int_{\partial\Omega}\int_{\partial B_\delta(y)}\varphi(x)\nabla_x \mathcal{G}_\e\cdot{\boldsymbol{\nu}}_x\,h(y)\,dS_x\,dS_y= 
-\int_{\partial \Omega}\varphi(y)\,h(y)\,dS_y,
\]
and so, returning to \eqref{almost}, we see that $u_\e$ does indeed satisfy \eqref{Helmholtz}-\eqref{bc} in the sense of \eqref{weak}.\\
\end{proof}

A simple fact to be used in the proof of Theorem \ref{GC} is contained in the following:
\begin{lemma}\label{poslem}
    For any positive values of $\e$ and $\alpha$ and for any $f\in L^2(\Omega)$ one has
    \begin{equation}
    \int_\Omega\int_\Omega \mathcal{G}_\e(x,y) f(x)f(y)\,dx\,dy\geq 0.\label{ab}
    \end{equation}
    Similarly, for any $h\in L^2(\partial\Omega)$ one has
   \begin{equation}
    \int_{\partial\Omega}\int_{\partial\Omega} \mathcal{G}_\e(x,y)\, h(x)\,h(y)\,dS_x\,dS_y\geq 0.\label{bb}
    \end{equation} 
\end{lemma}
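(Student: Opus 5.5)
The plan is to show that both quadratic forms in Lemma \ref{poslem} are energies of solutions to the screened problem. Rather than using Fourier transforms directly, we use the solution formula of Proposition \ref{solnform}. For the volume case \eqref{ab}, we take $h=0$ and the given $f$, and let
\[
u(x):=\int_\Omega \mathcal{G}_\e(x,y)f(y)\,dy,
\]
which is $-u_\e$ in \eqref{usoln}. Then $u$ is a weak solution of $\e^2\Delta u-\alpha^2 u=-\chi_\Omega f$ in $\R^3$. We use $u$ itself as the test function in \eqref{weak}, after a cutoff/density argument. This gives
\[
\int_{\R^3}\left\{\e^2\abs{\nabla u}^2+\alpha^2u^2\right\}dx=\int_\Omega f\,u\,dx=\int_\Omega\int_\Omega \mathcal{G}_\e(x,y)f(x)f(y)\,dx\,dy,
\]
so the left side is manifestly nonnegative and \eqref{ab} follows.

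The boundary case \eqref{bb} is identical. Take $f=0$ and set
\[
u(x)=\int_{\partial\Omega}\mathcal{G}_\e(x,y)h(y)\,dS_y.
\]
Testing \eqref{weak} with $u$ gives $\int_{\R^3}\e^2\abs{\nabla u}^2+\alpha^2u^2\,dx=\int_{\partial\Omega}h\,u\,dS$, and the right side is exactly the double integral in \eqref{bb}.

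The main obstacle is justifying $u$ as a test function. Proposition \ref{solnform} only asserts \eqref{weak} for smooth compactly supported $\varphi$. We therefore need three facts.
\begin{itemize}
\item $u\in H^1(\R^3)$. The kernel $\mathcal{G}_\e$ and its gradient decay exponentially and have singularities of order $r^{-1}$ and $r^{-2}$, so Young's inequality gives $u,\nabla u\in L^2$ in the volume case. In the boundary case, $\nabla_x\mathcal{G}_\e$ integrated against a surface density is $L^2$ by the standard single-layer estimates. Alternatively, one can take $h$ smooth first and conclude by density, since both sides of \eqref{bb} are continuous in $h$ in $H^{-1/2}$-type norms.
\item The pairing terms are continuous in $\varphi$ in the $H^1$ norm. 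The term $\int_\Omega f\varphi$ is, and $\int_{\partial\Omega}h\varphi\,dS$ is by the trace theorem.
\item We approximate $u$ by $\varphi_k\in C_c^\infty(\R^3)$ in $H^1$ and pass to the limit in \eqref{weak}.
\end{itemize}

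As a backup, and as a cleaner alternative that avoids these regularity issues, we can use Fourier transforms. We have $\widehat{G}(\xi)=(\e^2\abs{\xi}^2+\alpha^2)^{-1}>0$. Extending $f$ by zero, the form in \eqref{ab} equals $(2\pi)^{-3}\int \abs{\hat f}^2(\e^2\abs{\xi}^2+\alpha^2)^{-1}\,d\xi\geq0$. For \eqref{bb}, we apply the same identity to the measure $h\,dS$, or to mollifications $h_\delta$ of that measure, and pass to the limit using local integrability of $\mathcal{G}_\e$ on $\partial\Omega\times\partial\Omega$. This works because the kernel singularity $1/\abs{x-y}$ is integrable on the two-dimensional surface.
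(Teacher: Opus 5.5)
Your proposal is correct and follows the paper's route: both arguments identify each quadratic form with the energy $\int_{\R^3}\e^2|\nabla u|^2+\alpha^2u^2\,dx$ of the potential generated by $f$ (resp.\ $h$) via Proposition~\ref{solnform}. The paper reaches this identity by integrating by parts separately on $\Omega$ and $\R^3\setminus\overline{\Omega}$ using the jump conditions, while you test the weak formulation \eqref{weak} with $u$ itself, which is the same computation with the $H^1$/density justification made explicit; your Fourier backup via $\widehat{\mathcal{G}_\e}(\xi)=(\e^2|\xi|^2+\alpha^2)^{-1}>0$ is an independent and equally valid route.
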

\noindent{\it Proof of Lemma \ref{poslem}.}
Introducing the function
\[
z_\e(x):=\int_\Omega \mathcal{G}_\e(x,y)f(y)\,dy,
\]
it follows as in the proof of Proposition \ref{solnform} that $z_\e$ satisfies the problem
\begin{equation}
\e^2\Delta z_\e-\alpha^2 z_\e=\left\{\begin{matrix}
    -f(x)&\text{for}\;x\in\Omega\\ 0&\text{for}\;x\in\R^3\setminus \Omega,
\end{matrix}\right.\quad \big[\,z_\e\,\big]=0=\big[\,\frac{\partial z_\e}{\partial\nu}\,\big].\label{peter}   
\end{equation}

Then through an integration by parts one finds that
\begin{eqnarray*}
 &&\int_\Omega\int_\Omega \mathcal{G}_\e(x,y) f(x)f(y)\,dx\,dy=  \int_\Omega z_\e(x)f(x)\,dx\\&&=\int_{\R^3}z_\e(x)\left\{ -\e^2\Delta z_\e+\alpha^2 z_\e\right\} =
 \int_\Omega\cdot+\int_{\R^3\setminus \Omega}\cdot\\
 &&=\int_{\R^3} \e^2\abs{\nabla z_\e}^2+\alpha^2 z_\e^2\,dx,
\end{eqnarray*}
which establishes \eqref{ab}.

To establish \eqref{bb}, we introduce
\[
\omega_\e(y):=\int_{\partial\Omega}G_\e(x,y)\,h(x)\,dS_x,
\]
which by Proposition \ref{solnform} solves the problem
\[
\e^2\Delta \omega_\e-\alpha^2\omega_\e=0\;\text{in}\;\R^3\setminus\partial\Omega,\quad \big[\,\omega_\e\,\big]=0,\;\big[\,\frac{\partial \omega_\e}{\partial\nu}\,\big]=-\frac{1}{\e^2}h\;\text{on}\;\partial\Omega.
\]
Then
\begin{align*}
     &\int_{\partial\Omega}\int_{\partial\Omega} \mathcal{G}_\e(x,y)\, h(x)\,h(y)\,dS_x\,dS_y\\
     &=\int_{\partial\Omega}\omega_\e(y)h(y)\,dS_y=
     -\e^2\int_{\partial\Omega}\omega_\e\left(\frac{\partial\omega_\e^+}{\partial\nu}-\frac{\partial\omega_\e^-}{\partial\nu}\right)\,dS_y\\
     &=\e^2\left\{\int_{\Omega}\dive\big(\omega_\e\nabla\omega_\e\big)\,dy+
     \int_{\Omega^c}\dive\big(\omega_\e\nabla\omega_\e\big)\,dy\right\}\\
     &=\e^2\left\{\int_{\Omega}\frac{\alpha^2}{\e^2}\omega_\e^2+\abs{\nabla\omega_\e}^2\,dy+\int_{\Omega^c}\frac{\alpha^2}{\e^2}\omega_\e^2+\abs{\nabla\omega_\e}^2\,dy\right\}\geq 0.
\end{align*}
\qed\vskip.1in
We also require the following simple estimate:
\begin{lemma}\label{rootlem}
Assume $f\in L^2(\Omega)$ and $h\in L^2(\partial\Omega)$. Then
\begin{equation}
\label{eq:lem2}
\left|\int_{\Omega}\int_{\partial\Omega}\mathcal{G}_\e(x,y)f(x)h(y)\,dS_y\,dx\right|\leq \frac{C}{\sqrt{\e}}{\|f\|}_{L^2(\Omega)}{\|h\|}_{L^2(\partial\Omega)}.
\end{equation}
\end{lemma}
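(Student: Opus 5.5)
The plan is to view the bilinear form as the pairing of $f$ with a single-layer-type potential and to estimate that potential's $L^2(\Omega)$ norm with the Schur test. Set
\[
w_\e(x):=\int_{\partial\Omega}\mathcal{G}_\e(x,y)\,h(y)\,dS_y,\qquad x\in\Omega ,
\]
so that, by Fubini, the quantity on the left of \eqref{eq:lem2} is $\bigl|\int_\Omega f\,w_\e\,dx\bigr|\le \|f\|_{L^2(\Omega)}\|w_\e\|_{L^2(\Omega)}$. It therefore suffices to show that the integral operator $T_\e:L^2(\partial\Omega)\to L^2(\Omega)$ with the positive kernel $K_\e(x,y)=G_\e(|x-y|)$ has operator norm at most $C\e^{-1/2}$.

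By the Schur test (Cauchy--Schwarz with the splitting $K_\e=K_\e^{1/2}K_\e^{1/2}$), $\|T_\e\|\le\sqrt{AB}$, where
\[
A:=\sup_{x\in\Omega}\int_{\partial\Omega}K_\e(x,y)\,dS_y \quad\text{and}\quad B:=\sup_{y\in\partial\Omega}\int_\Omega K_\e(x,y)\,dx .
\]

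\emph{Bound on $B$.} Enlarge $\Omega$ to $\R^3$ and integrate in polar coordinates:
\[
B\le \int_0^\infty \frac{1}{4\pi\e^2 r}e^{-\frac{\alpha}{\e}r}\,4\pi r^2\,dr=\frac{1}{\e^2}\Bigl(\frac{\e}{\alpha}\Bigr)^2=\frac{1}{\alpha^2},
\]
which is independent of $\e$.

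\emph{Bound on $A$.} Fix $x\in\Omega$ and let $\sigma_x(r):=\mathcal{H}^2\bigl(\partial\Omega\cap B_r(x)\bigr)$. Writing the surface integral as a Stieltjes integral in $r$ and integrating by parts (the boundary terms vanish because $G_\e\to0$ at infinity and $\sigma_x(r)G_\e(r)\to 0$ as $r\to0$), we get
\[
\int_{\partial\Omega}G_\e(|x-y|)\,dS_y=\int_0^\infty \sigma_x(r)\bigl(-G_\e'(r)\bigr)\,dr .
\]
The needed geometric input is the uniform area bound $\sigma_x(r)\le C_\Omega r^2$ for all $x\in\R^3$ and $r>0$. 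For $r$ below a fixed $r_0$ this follows because $\partial\Omega$ is smooth and compact, so it is locally a graph with bounded gradient over a tangent plane. For $r\ge r_0$ we simply use $\sigma_x(r)\le|\partial\Omega|\le(|\partial\Omega|/r_0^2)\,r^2$. Inserting this bound together with the formula $-G_\e'(r)=\frac{1}{4\pi\e^2}\bigl(\frac{\alpha}{\e r}+\frac{1}{r^2}\bigr)e^{-\frac{\alpha}{\e}r}$ (computed already in the proof of Proposition \ref{solnform}) gives
\[
A\le \frac{C}{\e^2}\int_0^\infty\Bigl(\frac{\alpha r}{\e}+1\Bigr)e^{-\frac{\alpha}{\e}r}\,dr=\frac{C}{\e^2}\cdot\frac{2\e}{\alpha}=\frac{C'}{\alpha\,\e}.
\]

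Combining the two bounds gives $\|T_\e\|\le\sqrt{AB}\le C\alpha^{-3/2}\e^{-1/2}$, and hence \eqref{eq:lem2} with a constant depending only on $\Omega$ and $\alpha$. The step I expect to need the most care is the uniform estimate $\sigma_x(r)\le Cr^2$, uniform in both $x$ and $r$, which is what the smoothness of $\partial\Omega$ must supply. The rest is routine.
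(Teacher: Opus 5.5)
Your proof is correct, and it takes a genuinely different route from the paper's.

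The paper proves the dual estimate. It sets $z_\e(x)=\int_\Omega\mathcal{G}_\e(x,y)f(y)\,dy$ and uses the energy identity for \eqref{peter} to get $\|z_\e\|_{L^2(\Omega)}\le C\|f\|_{L^2(\Omega)}$ and $\|z_\e\|_{H^1(\Omega)}\le C\e^{-1}\|f\|_{L^2(\Omega)}$. The trace interpolation inequality $\|F\|_{L^2(\partial\Omega)}\le C\|F\|_{L^2(\Omega)}^{1/2}\|F\|_{H^1(\Omega)}^{1/2}$ then gives $\|z_\e\|_{L^2(\partial\Omega)}\le C\e^{-1/2}\|f\|_{L^2(\Omega)}$, and H\"older finishes.

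You instead bound the adjoint operator $L^2(\partial\Omega)\to L^2(\Omega)$ directly from the kernel via the Schur test. In both arguments $\e^{-1/2}$ appears as the geometric mean of an $O(1)$ and an $O(\e^{-1})$ quantity. For the paper these are the $L^2$ and $H^1$ norms of $z_\e$; for you they are the volume mass $B$ and the surface mass $A$ of the kernel. Tracking constants, both give the same $\alpha^{-3/2}\e^{-1/2}$ dependence.

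What each route buys:
\begin{itemize}
\item The paper's route recycles the PDE machinery already in place and uses the trace theorem as a black box, so it needs no explicit geometric information about $\partial\Omega$. It does rely on Proposition \ref{solnform} to justify the energy identity for $z_\e$.
\item Your route is elementary and self-contained. It uses only the explicit form of $G_\e$ plus the uniform area bound $\mathcal{H}^2(\partial\Omega\cap B_r(x))\le Cr^2$. For $x\notin\partial\Omega$ this bound follows from the case $x\in\partial\Omega$ by recentering at any point of $\partial\Omega\cap B_r(x)$ and doubling $r$; you should state that step explicitly.
\end{itemize}

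A side benefit of your approach is that the bound on $A$ is a rigorous version of $\int_{\partial\Omega}\mathcal{G}_\e(x,y)\,dS_y\lesssim (\alpha\e)^{-1}$. The paper uses this only in the heuristic form ``$\sim\frac{1}{2\alpha\e}$'' in Subcase 2.2.
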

\begin{proof}
We again introduce the function $z_\e:\R^3\to\R$ given by
\[
z_\e(x):=\int_{\Omega}\mathcal{G}_\e(x,y)f(y)\,dy,
\]
which in view of Proposition \ref{solnform} is a solution to the problem \eqref{peter}.

Multiplying this PDE by $z_\e$ and integrating by parts, and using the zero jump conditions, we find that
\begin{eqnarray*}
    &&\int_{\R^3}\e^2\abs{\nabla z_\e}^2+\alpha^2z_\e^2\,dx=\int_{\Omega}z_{\e}f\,dx\\
    &&\leq \frac{\alpha^2}{2}\int_{\Omega}z_\e^2\,dx+\frac{1}{2\alpha^2}\int_{\Omega}f^2\,dx.
\end{eqnarray*}
Hence, 
\[
\norm{z_\e}_{L^2(\Omega)}\leq C{\|f\|}_{L^2(\Omega)}\quad\text{and}\quad\norm{z_\e}_{H^1(\Omega)}\leq \frac{C}{\e}{\|f\|}_{L^2(\Omega)}\quad\text{where}\;C=C\left(\alpha\right).
\]
Then, in the light of interpolation inequality
\[
\norm{F}_{L^2(\partial\Omega)}\leq C\norm{F}_{L^2(\Omega)}^{1/2}\,\norm{F}_{H^1(\Omega)}^{1/2},
\]
which holds for any $F\in H^1(\Omega)$, (cf. \cite{EvansGariepy2015MeasureTheory}), we find that $z_\e$ obeys a bound of the form
\[
\norm{z_\e}_{L^2(\partial\Omega)}\leq \frac{C}{\sqrt{\e}}{\|f\|}_{L^2(\Omega)}.
\]
Then \eqref{eq:lem2} follows from an application of H\"older's inequality.
\end{proof}
\vskip.2in
We turn now to a proof of our $\Gamma$-convergence result.
\vskip.1in
\noindent{\it Proof of Theorem \ref{GC}.}\vskip.1in
\noindent{
\bf The liminf inequality.}
We begin with the assertion of lower-semi-continuity. That is,  we assume that
\begin{equation}
 {\bf{P}}_\e\rightharpoonup {\bf{P}}_0\;\text{weakly in}\; H^1(\Omega;\R^3),\label{weakH1}
\end{equation}
and
we wish to establish the claims:
\begin{equation}
\liminf_{\e\to 0}E_\e({\bf{P}}_\e)\geq E_0({\bf{P}}_0)\quad\text{and}\quad
\liminf_{\e\to 0}\tilde{E}_\e({\bf{P}}_\e)\geq \tilde{E}_0({\bf{P}}_0).\label{lsc}
\end{equation}
In light of Sobolev embedding, we may assume, after passing to a subsequence, that 
\begin{equation}
{\bf{P}}_\e\to {\bf{P}}_0\quad\text{strongly in}\; L^q(\Omega)\quad \text{and}\quad
{\bf{P}}_\e\to {\bf{P}}_0\quad\text{strongly in}\; L^q(\partial\Omega)\;\text{for all}\; q<6.\label{twocons}
\end{equation}
In particular, we note that 
\begin{equation}
    \norm{{\bf{P}}_\e}_{H^1(\Omega;\R^3)}<C_0\quad\text{and}\quad \norm{{\bf{P}}_\e}_{H^{1/2}(\partial\Omega)}<C_0\quad\text{for some}\;C_0>0\;\text{independent of}\;\e.\label{H1bdry}
\end{equation}
As it will come up later in the proof, we recall that a useful characterization of the $H^{1/2}(\partial\Omega)$-norm of a function $h$ is  the sum of the $L^2(\partial\Omega)$-norm along with control of the following semi-norm:
    \begin{align}
\int_{\partial\Omega}\int_{\partial\Omega} \frac{|h(x)-h(y)|^2}{|x-y|^{3}}\,dS_x\,dS_y,\label{Half}
\end{align}
which is therefore uniformly bounded with $h={\bf{P}}_\e$ in light of \eqref{H1bdry},
cf. \cite{DNPV12}.

To begin the proof, we first note that through an appeal to the convexity of the term $\int \abs{\nabla {\bf{P}}_\e}^2$ we may immediately assert that
\begin{equation}
\liminf_{\e\to 0} \int_{\Omega} \abs{\nabla {\bf{P}}_\e}^2\,dx\geq  \int_{\Omega} \abs{\nabla {\bf{P}}_0}^2\,dx\quad\text{and that}\quad \abs{{\bf{P}}_0}=1.\label{easypart}
\end{equation}
Furthermore, in the case of $\tilde{E}_\e$ containing the Ginzburg-Landau potential, cf. \eqref{withGL}, we have
\[
\lim_{\e\to 0}\int_{\Omega}\frac{1}{4\eta^2}(\abs{{\bf{P}}_\e}^2-1)^2\,dx=
\int_{\Omega}\frac{1}{4\eta^2}(\abs{{\bf{P}}_0}^2-1)^2\,dx.
\]
To establish the claim \eqref{lsc} in either the case of $E_\e$ or $\tilde{E}_\e$ it then remains to verify that
\begin{equation}
\liminf_{\e\to 0} \int_{\Omega} \,{\bf{P}}_\e(x)\cdot \nabla u_\e(x)\,dx\geq \left\{\begin{matrix} \frac{1}{\alpha^2}\int_{\Omega}(\dive {\bf{P}}_0(x))^2\,dx&\text{if}\;{\bf{P}}_0\cdot{\boldsymbol{\nu}}=0\\
+\infty&\text{otherwise.}\end{matrix}\right.
\label{toshow}
\end{equation}
To this end, we integrate by parts and substitute in the formula \eqref{usoln} for $u_\e$ to find that
\begin{eqnarray}
&&\int_{\Omega} {\bf{P}}_\e(x)\cdot\nabla u_\e(x)\,dx= -\int_{\Omega}u_\e(x)\dive {\bf{P}}_\e(x)\,dx+\int_{\partial\Omega}u_\e(x){\bf{P}}_\e(x)\cdot{\boldsymbol{\nu}}_x\,dS_x\nonumber\\
&&=\int_{\Omega}\int_{\Omega}\mathcal{G}_\e(x,y)\dive {\bf{P}}_\e(y)\dive {\bf{P}}_\e(x)\,dy\,dx\nonumber\\
&&+\int_{\partial\Omega}\int_{\partial\Omega}\mathcal{G}_\e(x,y){\bf{P}}_\e(y)\cdot{\boldsymbol{\nu}}_y\,{\bf{P}}_\e(x)\cdot{\boldsymbol{\nu}}_x\,dS_y\,dS_x\nonumber\\
&&-2\int_{\Omega}\int_{\partial\Omega}\mathcal{G}_\e(x,y)\dive {\bf{P}}_\e(x){\bf{P}}_\e(y)\cdot{\boldsymbol{\nu}}_y\,dS_y\,dx\nonumber\\
&&=I+II+III.\label{first}
\end{eqnarray}

We will first argue that 
\begin{equation}
\liminf_{\e\to 0} I\geq \frac{1}{\alpha^2}\int_{\Omega}(\dive {\bf{P}}_0(x))^2\,dx.\label{claimone}
\end{equation}
To see this, we note that since 
\begin{equation}
   \int_{\R^3}\frac{1}{4\pi\abs{x}}e^{-\alpha\abs{x}}\,dx=\frac{1}{\alpha^2},\quad\text{it follows from \eqref{Green} that}\quad \int_{\R^3}\mathcal{G}_\e(x,y)\,dy=\frac{1}{\alpha^2},\label{approxid} 
\end{equation}
as well, for every $x$ and every $\e>0$. Since addditionally, $G_\e(r)$ concentrates on a smaller and smaller neighborhood of the origin, it serves as an approximation to the identity. Hence, it follows from standard theory, (see e.g. \cite{WZ}, pg. 148) that for any $L^2(\R^3)$ function $f$ one has
\begin{equation}
\int_{\R^3}\mathcal{G}_\e(x,y)f(y)\,dy\to \frac{1}{\alpha^2}\,f\quad\text{in}\;L^2(\R^3)\;\text{as}\;\e\to 0.\label{standard}
\end{equation} 
Then, writing $I$=
\begin{equation*}
\int_{\Omega}\int_{\Omega}\mathcal{G}_\e(x,y)\dive {\bf{P}}_0(y)\dive {\bf{P}}_\e(x)\,dy\,dx
+\int_{\Omega}\int_{\Omega}\mathcal{G}_\e(x,y)\big(\dive {\bf{P}}_\e(y)-\dive {\bf{P}}_0(y)\big)\dive {\bf{P}}_\e(x)\,dy\,dx,
\end{equation*}
we find that \eqref{weakH1} and \eqref{standard} immediately yield a limit for the first term on the right:
\begin{equation}
\lim_{\e\to 0}\int_{\Omega}\int_{\Omega}\mathcal{G}_\e(x,y)\dive {\bf{P}}_0(y)\dive {\bf{P}}_\e(x)\,dy\,dx=\frac{1}{\alpha^2}\int_{\Omega}\big(\dive {\bf{P}}_0(x)\big)^2\,dx.\label{gammafirst}
\end{equation}
For the second term, we invoke \eqref{ab} to assert that
\begin{eqnarray*}
&&\int_{\Omega}\int_{\Omega}\mathcal{G}_\e(x,y)\big(\dive {\bf{P}}_\e(y)-\dive {\bf{P}}_0(y)\big)\dive {\bf{P}}_\e(x)\,dy\,dx\\
&&=\int_{\Omega}\int_{\Omega}\mathcal{G}_\e(x,y)\big(\dive {\bf{P}}_\e(y)-\dive {\bf{P}}_0(y)\big)\big(\dive {\bf{P}}_\e(x)-\dive {\bf{P}}_0(x)\big)\,dy\,dx\\
&&+\int_{\Omega}\int_{\Omega}\mathcal{G}_\e(x,y)\dive {\bf{P}}_0(x)\big(\dive {\bf{P}}_\e(y)-\dive {\bf{P}}_0(y)\big)\,dy\,dx\geq\\
&& \int_{\Omega}\int_{\Omega}\mathcal{G}_\e(x,y)\dive {\bf{P}}_0(x)\big(\dive {\bf{P}}_\e(y)-\dive {\bf{P}}_0(y)
\big)\,dy\,dx.
\end{eqnarray*}
Appealing again to \eqref{weakH1} and \eqref{standard}, we find that this last integral approaches zero as a product of a strongly convergent sequence and a sequence that weakly converges to zero. Hence, we arrive at the conclusion \eqref{claimone}.

From here, we will split the argument into two cases, depending on whether or not ${\bf{P}}_0\cdot{\boldsymbol{\nu}}= 0$ a.e. on $\partial \Omega$.
\vskip.1in\noindent
{\bf Case 1.}
\[\text{Suppose}\quad {\bf{P}}_0(x)\cdot{\boldsymbol{\nu}}_x\not=0\;\text{on a set of positive measure of}\;\partial \Omega .\]
We will argue that
\begin{equation}
\liminf_{\e\to 0}\; II\geq \frac{C}{\e}\quad\text{while}\quad \abs{III}\leq\frac{C}{\sqrt{\e}},\label{toobig}   
\end{equation}
which, when combined with \eqref{claimone}, will imply \eqref{lsc} since then necessarily,
\begin{equation}
\liminf_{\e\to 0}E_\e({\bf{P}}_\e)=\liminf_{\e\to 0}\tilde{E}_\e({\bf{P}}_\e)=\infty.\label{infty}
\end{equation}

To obtain \eqref{toobig}, we observe that 
\begin{equation}
\int_{\R^2}\e G_\e(\abs{y})\,dy=\frac{1}{2\alpha}\label{2dint}
\end{equation}
for every $\e>0$ and every $\alpha>0$, while concentrating at the origin for small $\e$. Therefore, like $\mathcal{G}_\e$ on $\R^3$, $\e\,\mathcal{G}_\e$ serves as an approximation to the identity on $\R^2$. Defining $h_\e:\partial\Omega\to\R$ via
\begin{equation}
h_\e(x):=\int_{\partial\Omega}\e\,\mathcal{G}_\e(x,y){\bf{P}}_0(y)\cdot{\boldsymbol{\nu}}_y\,dS_y,\label{heps}    
\end{equation}
one then has that 
\begin{equation}
h_\e\to \frac{1}{2\alpha}{\bf{P}}_0(x)\cdot \nu_x\quad\text{in}\;L^2(\partial\Omega)\quad\text{as}\;\e\to 0.\label{bdrycon}
\end{equation}

Consequently, arguing along the lines used above in the analysis of term $I$ we have
\begin{eqnarray}
&&\int_{\partial\Omega}\int_{\partial\Omega}\e\mathcal{G}_\e(x,y){\bf{P}}_\e(y)\cdot{\boldsymbol{\nu}}_y\,{\bf{P}}_\e(x)
\cdot{\boldsymbol{\nu}}_x\,dS_y\,dS_x= \nonumber\\&&\int_{\partial\Omega}\int_{\partial\Omega}\e\mathcal{G}_\e(x,y){\bf{P}}_0(y)\cdot{\boldsymbol{\nu}}_y\,{\bf{P}}_\e(x)\cdot{\boldsymbol{\nu}}_x\,dS_y\,dS_x\nonumber\\
&&+\int_{\partial\Omega}\int_{\partial\Omega}\e\mathcal{G}_\e(x,y)\big({\bf{P}}_\e(y)-{\bf{P}}_0(y)\big)\cdot{\boldsymbol{\nu}}_y\,{\bf{P}}_\e(x)\cdot{\boldsymbol{\nu}}_x\,dS_y\,dS_x\nonumber\\
&&=
\int_{\partial\Omega}\int_{\partial\Omega}\e\mathcal{G}_\e(x,y){\bf{P}}_0(y)\cdot{\boldsymbol{\nu}}_y\,{\bf{P}}_\e(x)\cdot{\boldsymbol{\nu}}_x\,dS_y\,dS_x\nonumber\\
&&+\int_{\partial\Omega}\int_{\partial\Omega}\e\mathcal{G}_\e(x,y)\big({\bf{P}}_\e(y)-{\bf{P}}_0(y)\big)\cdot{\boldsymbol{\nu}}_y\,{\bf{P}}_0(x)\cdot{\boldsymbol{\nu}}_x\,dS_y\,dS_x\nonumber\\
&&+\int_{\partial\Omega}\int_{\partial\Omega}\e\mathcal{G}_\e(x,y)\big({\bf{P}}_\e(y)-{\bf{P}}_0(y)\big)\cdot{\boldsymbol{\nu}}_y\,\big({\bf{P}}_\e(x)-{\bf{P}}_0(x)\big)\cdot{\boldsymbol{\nu}}_x\,dS_y\,dS_x\nonumber\\
&&\geq
\int_{\partial\Omega}\int_{\partial\Omega}\e\mathcal{G}_\e(x,y){\bf{P}}_0(y)\cdot{\boldsymbol{\nu}}_y\,{\bf{P}}_\e(x)\cdot{\boldsymbol{\nu}}_x\,dS_y\,dS_x\nonumber\\
&&+\int_{\partial\Omega}\int_{\partial\Omega}\e\mathcal{G}_\e(x,y)\big({\bf{P}}_\e(y)-{\bf{P}}_0(y)\big)\cdot{\boldsymbol{\nu}}_y\,{\bf{P}}_0(x)\cdot{\boldsymbol{\nu}}_x\,dS_y\,dS_x,\label{long}
\end{eqnarray}
where in the last inequality we invoke \eqref{bb}. Then appealing to \eqref{twocons} and \eqref{bdrycon}, we conclude that
\begin{equation}
\liminf_{\e\to 0}\int_{\partial\Omega}\int_{\partial\Omega}\e\mathcal{G}_\e(x,y){\bf{P}}_\e(y)\cdot{\boldsymbol{\nu}}_y\,{\bf{P}}_\e(x)
\cdot{\boldsymbol{\nu}}_x\,dS_y\,dS_x\geq \frac{1}{2\alpha}\int_{\partial\Omega}({\bf{P}}_0(x)\cdot{\boldsymbol{\nu}}_x)^2\,dS_x,
\label{bigeps}
\end{equation}
from which it follows that $II\geq C/\e$ if 
${\bf{P}}_0(x)\cdot{\boldsymbol{\nu}}_x\not\equiv 0.$

Turning to the estimate of the term $III$ from \eqref{first}, we can apply Lemma \ref{rootlem} with $f=\dive {\bf{P}}_\e$ and $h={\bf{P}}_\e\cdot{\boldsymbol{\nu}}$ to find that
\[
\abs{III}\leq \frac{C}{\sqrt{\e}}\norm{\dive {\bf{P}}_\e}_{L^2(\Omega)}\,\norm{{\bf{P}}_\e\cdot \nu}_{L^2(\partial\Omega)}\leq
\frac{C}{\sqrt{\e}}\,C_0^2,
\]
in light of \eqref{H1bdry}. This finishes the verification of \eqref{toobig} and Case 1 is complete.
\vskip.1in
{\bf Case 2.}
\[
\text{Suppose}\quad {\bf{P}}_0(x)\cdot{\boldsymbol{\nu}}_x=0\;\text{a.e. on}\;\partial \Omega .
\]
If ${\bf{P}}_0\cdot{\boldsymbol{\nu}}=0$, then by \eqref{twocons}, we have that ${\bf{P}}_{\e}\cdot{\boldsymbol{\nu}}\to 0$ in $L^2(\partial\Omega)$. Passing to a further subsequence, if necessary, we can assume that either $\frac{\norm{{\bf{P}}_\e\cdot{\boldsymbol{\nu}}}_{L^2(\partial\Omega)}}{\sqrt{\e}}$ has a finite limit as $\e\to0$ or else it approaches infinity.
\vskip.1in
\noindent{\bf Subcase 2.1}
\begin{equation}
    \text{Suppose that }
\lim_{\e\to 0}\frac{\norm{{\bf{P}}_\e\cdot{\boldsymbol{\nu}}}_{L^2(\partial\Omega)}}{\sqrt{\e}}=0.\label{subtwoone}
\end{equation}
We proceed by returning to the expression \eqref{first} for $\int_{\Omega} {\bf{P}}_\e(x)\cdot\nabla u_\e(x)\,dx$ and replacing each appearance of ${\bf{P}}_\e$ by ${\bf{P}}_0+({\bf{P}}_\e-{\bf{P}}_0)$, yielding:
\begin{eqnarray}
&&\int_{\Omega} {\bf{P}}_\e(x)\cdot\nabla u_\e(x)\,dx= \int_{\Omega}\int_{\Omega}\mathcal{G}_\e(x,y)\dive {\bf{P}}_0(y)\dive {\bf{P}}_0(x)\,dy\,dx\nonumber\\
&&+\int_{\Omega}\int_{\Omega}\mathcal{G}_\e(x,y)\left(\dive {\bf{P}}_\e(y)-\dive {\bf{P}}_0(y)\right)\left(\dive {\bf{P}}_\e(x)-\dive {\bf{P}}_0(x)\right)\,dy\,dx\nonumber\\
&&+2\int_{\Omega}\int_{\Omega}\mathcal{G}_\e(x,y)\dive {\bf{P}}_0(x)\left(\dive {\bf{P}}_\e(y)-\dive {\bf{P}}_0(y)\right)\,dx\,dy\nonumber\\
&&+\int_{\partial\Omega}\int_{\partial\Omega}\mathcal{G}_\e(x,y)\left({\bf{P}}_\e(y)-{\bf{P}}_0(y)\right)\cdot{\boldsymbol{\nu}}_y\,\left({\bf{P}}_\e(x)-{\bf{P}}_0(x)\right)\cdot{\boldsymbol{\nu}}_x\,dS_y\,dS_x\nonumber\\
&&-2\int_{\Omega}\int_{\partial\Omega}\mathcal{G}_\e(x,y)\left(\dive {\bf{P}}_\e(x)-\dive {\bf{P}}_0(x)\right)\left({\bf{P}}_\e(y)-{\bf{P}}_0(y)\right)\cdot{\boldsymbol{\nu}}_y\,dS_y\,dx\nonumber\\
&&-2\int_{\Omega}\int_{\partial\Omega}\mathcal{G}_\e(x,y)\dive {\bf{P}}_0(x){\bf{P}}_\e(y)\cdot{\boldsymbol{\nu}}_y\,dS_y\,dx.
\label{subcaseone}
\end{eqnarray}

Regarding the integral appearing in the third line of \eqref{subcaseone}, we note that
\[
\int_{\Omega}\mathcal{G}_\e(x,y)\dive {\bf{P}}_0(x)\,dx\to \dive {\bf{P}}_0\;\text{in}\;L^2(\Omega),\;\text{and}\; \dive {\bf{P}}_\e\rightharpoonup \dive {\bf{P}}_0\;\text{weakly in}\; L^2(\Omega).\]
Hence, this integral vanishes in the limit $\e\to 0.$
On the other hand, we claim that the combination of the three integrals appearing in the second, fourth and fifth lines of \eqref{subcaseone} is non-negative. Indeed, their sum is simply given by
\[
\int_{\R^3}\e^2\abs{\nabla \tilde{u}_\e}^2+\alpha^2 \tilde{u}_\e^2\,dx,
\]
where $\tilde{u}_\e$ is the solution to \eqref{Helmholtz}--\eqref{bc} for ${\bf{P}}={\bf{P}}_\e-{\bf{P}}_0$.
Hence, after applying \eqref{gammafirst} to the integral on the right-hand side of the first line of \eqref{subcaseone}, we will have established \eqref{toshow} if the integral in the last line of \eqref{subcaseone} vanishes as $\e\to 0$. Applying Lemma \ref{rootlem} with $f=\dive {\bf{P}}_0$ and $h={\bf{P}}_\e\cdot{\boldsymbol{\nu}}$, and appealing to assumption \eqref{subtwoone}, we see that this is indeed the case and Subcase 2.1 is complete.
\vskip.1in
\noindent{\bf Subcase 2.2}
\begin{equation}
    \text{Suppose }\;
\lim_{\e\to 0}\frac{\norm{{\bf{P}}_\e\cdot{\boldsymbol{\nu}}}_{L^2(\partial\Omega)}}{\sqrt{\e}}=\infty.
\label{bignorm}
\end{equation}
Referring back to \eqref{first}, we will argue that 
 $II\gg \abs{III}$  and that $II \to \infty$, again implying \eqref{infty}. We observe that $II$ can be written as 
\begin{multline}
    \label{eq:II}
\int_{\partial\Omega}\int_{\partial\Omega}\mathcal{G}_\e(x,y)\left({\bf{P}}_\e(y)\cdot{\boldsymbol{\nu}}_y\right)\,\left({\bf{P}}_\e(x)\cdot{\boldsymbol{\nu}}_x\right)\,dS_y\,dS_x=\int_{\partial\Omega}\int_{\partial\Omega}\mathcal{G}_\e(x,y){\left({\bf{P}}_\e(x)\cdot{\boldsymbol{\nu}}_x\right)}^2\,dS_y\,dS_x
\\+\int_{\partial\Omega}\int_{\partial\Omega}\mathcal{G}_\e(x,y)\left({\bf{P}}_\e(y)\cdot{\boldsymbol{\nu}}_y-{\bf{P}}_\e(x)\cdot{\boldsymbol{\nu}}_x\right)\,\left({\bf{P}}_\e(x)\cdot{\boldsymbol{\nu}}_x\right)\,dS_y\,dS_x=:II_1+{II}_2.
\end{multline}

To estimate ${II}_1,$ we invoke \eqref{2dint}, which implies that 
\[\int_{\R^2}G_\e(|x|)\,dx=\frac{1}{2\alpha\e},\]
so that
\[\label{eq:II_1est}   \int_{\partial\Omega}\mathcal{G}_\e(x,y)\,dS_y\sim\frac{1}{2\alpha\e},
\]
for a smooth and bounded $\partial\Omega$. It follows that
\begin{equation}
    \label{dmitry}   {II}_1\sim\frac{1}{2\alpha\e}{\|{\bf{P}}_\e\cdot{\boldsymbol{\nu}}\|}^2_{L^2(\partial\Omega)}.
\end{equation}
To estimate ${II}_2,$ we multiply and divide the integrand by ${|x-y|}^{3/2}$ and apply H\"older's inequality to the integral in $dS_y$ for all $x\in\partial\Omega$ to obtain
\begin{multline}\label{eq:IIcomp}{|II}_2|\leq\int_{\partial\Omega}\left|{\bf{P}}_\e(x)\cdot{\boldsymbol{\nu}}_x\right|{\left(\int_{\partial\Omega}\mathcal{G}_\e(x,y)^2{\left|x-y\right|}^3\,dS_y\right)}^{\frac12}\\\times{\left(\int_{\partial\Omega}\frac{{\left({\bf{P}}_\e(y)\cdot{\boldsymbol{\nu}}_y-{\bf{P}}_\e(x)\cdot{\boldsymbol{\nu}}_x\right)}^2}{{\left|x-y\right|}^3}\,dS_y\right)}^{\frac12}\,dS_x.\end{multline}
Because $\partial\Omega$ is smooth and bounded, there exists a constant $C$ which depends only on $\partial\Omega$ when intersecting with any sphere one has the arclength bound
\begin{equation}
    \label{eq:Cs}
    \mathcal{H}^1\left(\partial\Omega\cap B_r(x)\right)\leq Cr
\end{equation}
for all $r>0,$ uniformly in $x\in \partial\Omega$. From \eqref{Green}, \eqref{eq:Cs} and the co-area formula, we obtain
\begin{multline}
    \label{eq:Gest}
    \int_{\partial\Omega}\mathcal{G}_\e(x,y)^2{\left|x-y\right|}^3\,dS_y=\frac{1}{16\pi^2\e^4}\int_{\partial\Omega}{\left|x-y\right|}e^{-\frac{2\alpha}{\e}\left|x-y\right|}\,dS_y\\
    =\frac{C}{\e^4}\int_0^\infty\int_{\partial\Omega\cap\{y:\,\abs{y-x}=r\}}re^{-\frac{2\alpha}{\e}r}\,d\mathcal{H}^1\,dr\\
    \leq \frac{C}{\e^4}\int_0^\infty r^2e^{-\frac{2\alpha}{\e}r}\,dr=
     C\e^{-1},
\end{multline}
for each $x\in\partial\Omega$ and some $C>0$ independent of $\e$. Combining \eqref{eq:IIcomp} and \eqref{eq:Gest} and now using  H\"older's inequality for the integral in $dS_x$ we find that
\begin{multline}
    \label{eq:23}
    |{II}_2|\leq C\e^{-\frac{1}{2}}\int_{\partial\Omega}\left|{\bf{P}}_\e(x)\cdot{\boldsymbol{\nu}}_x\right|{\left(\int_{\partial\Omega}\frac{{\left({\bf{P}}_\e(y)\cdot{\boldsymbol{\nu}}_y-{\bf{P}}_\e(x)\cdot{\boldsymbol{\nu}}_x\right)}^2}{{\left|x-y\right|}^3}\,dS_y\right)}^{\frac12}\,dS_x\\\leq C\e^{-\frac{1}{2}}{\|{\bf{P}}_\e\cdot{\boldsymbol{\nu}}\|}_{L^2(\partial\Omega)}{\left(\int_{\partial\Omega}\int_{\partial\Omega}\frac{{\left({\bf{P}}_\e(y)\cdot{\boldsymbol{\nu}}_y-{\bf{P}}_\e(x)\cdot{\boldsymbol{\nu}}_x\right)}^2}{{\left|x-y\right|}^3}\,dS_y\right)}^{\frac12}\\\leq C\e^{-\frac{1}{2}}{\|{\bf{P}}_\e\cdot{\boldsymbol{\nu}}\|}_{L^2(\partial\Omega)}{\|{\bf{P}}_\e\cdot{\boldsymbol{\nu}}\|}_{H^{1/2}(\partial\Omega)}\leq C\e^{-\frac{1}{2}}{\|{\bf{P}}_\e\cdot{\boldsymbol{\nu}}\|}_{L^2(\partial\Omega)},
\end{multline}
in light of the uniform $H^{1/2}(\partial\Omega)$ bound on ${\bf{P}}_\e\cdot{\boldsymbol{\nu}}$ supplied by \eqref{H1bdry}.

Comparing the estimates \eqref{dmitry} and \eqref{eq:23}, we conclude that ${II}_1\gg\left|{II}_2\right|$ as long as 
\[\frac{1}{\e}{\|{\bf{P}}_\e\cdot{\boldsymbol{\nu}}\|}^2_{L^2(\partial\Omega)}\gg\e^{-\frac{1}{2}}{\|{\bf{P}}_\e\cdot{\boldsymbol{\nu}}\|}_{L^2(\partial\Omega)},\]
i.e., provided that
\[{\|{\bf{P}}_\e\cdot{\boldsymbol{\nu}}\|}_{L^2(\partial\Omega)}\gg\e^{\frac{1}{2}}.\]
Combining this inequality with the condition \eqref{bignorm} guarantees that ${II}_1\gg\left|{II}_2\right|$ when $\e$ is sufficiently small. Hence,
\[
II\sim II_1\sim \frac{1}{2\alpha\e}{\|{\bf{P}}_\e\cdot{\boldsymbol{\nu}}\|}^2_{L^2(\partial\Omega)}.
\]
Using the estimate \eqref{eq:lem2} from Lemma \ref{rootlem} once again, however, shows that $\abs{III}\leq 
\frac{C}{\sqrt{\e}}\|{\bf{P}}_\e\cdot{\boldsymbol{\nu}}\|_{L^2(\partial\Omega)}$, and so $II\gg\left|III\right|$ for a small enough $\e,$ as long as \eqref{bignorm} is satisfied. Since $II\to\infty$ as $\e\to 0$, we obtain \eqref{infty}, completing Subcase 2.2.
\vskip.1in
\noindent{\bf Subcase 2.3}
\begin{equation}
    \text{Suppose that}
\lim_{\e\to 0}\frac{\norm{{\bf{P}}_\e\cdot{\boldsymbol{\nu}}}_{L^2(\partial\Omega)}}{\sqrt{\e}}=C>0.
\label{thresh}
\end{equation}
Here we will proceed in the same manner as in Subcase 2.1, except for the convergence estimate on 
\[\int_{\Omega}\int_{\partial\Omega}\mathcal{G}_\e(x,y)\dive {\bf{P}}_0(x){\bf{P}}_\e(y)\cdot{\boldsymbol{\nu}}_y\,dS_y\,dx.\]
Here we will select an arbitrary $\delta>0$ and a sequence $\{g_n\}\subset C^\infty(\bar\Omega)$ that converges to $\dive {\bf{P}}_0$ in $L^2(\Omega),$ then choose a $k\in\N$ such that ${\|\dive {\bf{P}}_0-g_k\|}_{L^2(\Omega)}<\delta.$ We can write
\begin{multline}
    \label{eq:zub}
    \int_{\Omega}\int_{\partial\Omega}\mathcal{G}_\e(x,y)\dive {\bf{P}}_0(x){\bf{P}}_\e(y)\cdot{\boldsymbol{\nu}}_y\,dS_y\,dx\\=\int_{\Omega}\int_{\partial\Omega}\mathcal{G}_\e(x,y)g_k(x){\bf{P}}_\e(y)\cdot{\boldsymbol{\nu}}_y\,dS_y\,dx\\+\int_{\Omega}\int_{\partial\Omega}\mathcal{G}_\e(x,y)\left(\dive {\bf{P}}_0(x)-g_k(x)\right){\bf{P}}_\e(y)\cdot{\boldsymbol{\nu}}_y\,dS_y\,dx.
\end{multline}
Using \eqref{approxid}, the first integral in \eqref{eq:zub} can be estimated as follows:
\begin{equation}
    \label{eq:zub1} \left|\int_{\Omega}\int_{\partial\Omega}\mathcal{G}_\e(x,y)g_k(x){\bf{P}}_\e(y)\cdot{\boldsymbol{\nu}}_y\,dS_y\,dx\right|\leq\frac{1}{\alpha^2}{\|g_k\|}_{L^\infty(\Omega)}{\|{\bf{P}}_\e\cdot{\boldsymbol{\nu}}\|}_{L^1(\partial\Omega)}\leq C_k{\|{\bf{P}}_\e\cdot{\boldsymbol{\nu}}\|}_{L^2(\partial\Omega)},
\end{equation}
and thus it converges to $0$ as $\e\to0.$ The second integral can be evaluated using Lemma \ref{rootlem}; that is,
\begin{multline}
    \label{eq:zub2}
    \left|\int_{\Omega}\int_{\partial\Omega}\mathcal{G}_\e(x,y)\left(\dive {\bf{P}}_0(x)-g_k(x)\right){\bf{P}}_\e(y)\cdot{\boldsymbol{\nu}}_y\,dS_y\,dx\right|\\\leq \frac{C}{\sqrt{\e}}{\|\dive {\bf{P}}_0-g_k\|}_{L^2(\Omega)}{\|{\bf{P}}_\e\cdot{\boldsymbol{\nu}}\|}_{L^2(\partial\Omega)}\leq C\delta\frac{{\|{\bf{P}}_\e\cdot{\boldsymbol{\nu}}\|}_{L^2(\partial\Omega)}}{\sqrt{\e}}.
\end{multline}
Combining this with \eqref{thresh}, we obtain 
\[\limsup_{\e\to 0}\left|\int_{\Omega}\int_{\partial\Omega}\mathcal{G}_\e(x,y)\left(\dive {\bf{P}}_0(x)-g_k(x)\right){\bf{P}}_\e(y)\cdot{\boldsymbol{\nu}}_y\,dS_y\,dx\right|\leq C\delta,\]
for some $C$ independent of $\delta$. Since $\delta$ is arbitrary, we conclude that 
\[\lim_{\e\to 0}\int_{\Omega}\int_{\partial\Omega}\mathcal{G}_\e(x,y)\left(\dive {\bf{P}}_0(x)-g_k(x)\right){\bf{P}}_\e(y)\cdot{\boldsymbol{\nu}}_y\,dS_y\,dx=0.\]

This concludes the proof of the liminf inequality for $\Gamma$-convergence.
\vskip.1in
\noindent{\bf Recovery sequence}.
To finish the proof of Theorem \ref{GC}, we turn to the construction of a recovery sequence,  say $\{\mathcal{{\bf{P}}}_\e\}$, associated with a given vector field ${\bf{P}}_0\in H^1(\Omega;\R^3)$. This sequence must satisfy the conditions 
\begin{equation}
\mathcal{{\bf{P}}}_\e\rightharpoonup {\bf{P}}_0\;\text{weakly in}\; H^1(\Omega;\R^3)\;\text{as}\;\e\to 0,\label{conv}
\end{equation}
and 
\begin{equation}
    \lim_{\e\to 0}E_\e(\mathcal{{\bf{P}}_\e})=E_0({\bf{P}}_0)\quad\text{or}\quad \lim_{\e\to 0}\tilde{E}_\e(\mathcal{{\bf{P}}_\e})=\tilde{E}_0({\bf{P}}_0),\label{limsup}
\end{equation}
depending on whether we are working with $E_\e$ from \eqref{noGL}, in which case we additionally require $\mathcal{{\bf{P}}}_\e$ to be $\mathbb{S}^2$-valued, or we are working with $\tilde{E}_\e$ from \eqref{withGL}. 

In light of the definitions of $E_0$ or $\tilde{E}_0$, there are two cases to analyze, namely when ${\bf{P}}_0\cdot{\boldsymbol{\nu}}=0$ a.e. on $\partial\Omega$ and when ${\bf{P}}_0\cdot{\boldsymbol{\nu}}\not=0$ on a set of positive measure of $\partial\Omega$. We claim that in either case, the `trivial' recovery sequence will suffice.

Consider first the case where  ${\bf{P}}_0\cdot{\boldsymbol{\nu}}=0$ a.e. on $\partial\Omega$. Choosing $\mathcal{{\bf{P}}}_\e\equiv {\bf{P}}_0$ for each $\e$, conditions \eqref{limsup} both reduce to demonstrating that
\begin{equation}
    \lim_{\e\to 0}\int_{\Omega}{\bf{P}}_0(x)\cdot\nabla u_\e(x)\,dx=\frac{1}{\alpha^2}\int_{\Omega}(\dive {\bf{P}}_0(x))^2\,dx,
\end{equation}
where $u_\e$ is the solution to problem \eqref{Helmholtz}-\eqref{bc} with ${\bf{P}}$ given by ${\bf{P}}_0$.
However, in light of \eqref{first}, this reduces to the limit 
\eqref{gammafirst}, already established.

Finally, in case ${\bf{P}}_0\cdot{\boldsymbol{\nu}}\not=0$ on a set of positive measure of $\partial\Omega$, we claim the trivial sequence $\{\mathcal{{\bf{P}}}_\e\}$ again suffices. Here we must show that
\begin{equation}
 \lim_{\e\to 0}\int_{\Omega}{\bf{P}}_0(x)\cdot\nabla u_\e(x)\,dx=\infty.\label{finally}
\end{equation}
Recalling the decomposition given by \eqref{first} as well as \eqref{gammafirst} with ${\bf{P}}_\e$ replaced by ${\bf{P}}_0$, we observe that
\begin{eqnarray*}
  &&  \int_{\Omega}{\bf{P}}_0(x)\cdot\nabla u_\e(x)\,dx\geq \\ &&
\int_{\partial\Omega}\int_{\partial\Omega}\mathcal{G}_\e(x,y){\bf{P}}_0(y)\cdot{\boldsymbol{\nu}}_y\,{\bf{P}}_0(x)
\cdot{\boldsymbol{\nu}}_x\,dS_y\,dS_x
-2\int_{\Omega}\int_{\partial\Omega}\mathcal{G}_\e(x,y)\dive {\bf{P}}_0(x){\bf{P}}_0(y)\cdot{\boldsymbol{\nu}}_y\,dS_y\,dx.
\end{eqnarray*}
Then claim \eqref{finally} will follow since one can argue that
\[
\int_{\partial\Omega}\int_{\partial\Omega}\mathcal{G}_\e(x,y){\bf{P}}_0(y)\cdot{\boldsymbol{\nu}}_y\,{\bf{P}}_0(x)
\cdot{\boldsymbol{\nu}}_x\,dS_y\,dS_x\geq \frac{C}{\e}
\]
using the same reasoning that led to \eqref{long} and \eqref{bigeps}, while 
\[
\int_{\Omega}\int_{\partial\Omega}\mathcal{G}_\e(x,y)\dive {\bf{P}}_0(x){\bf{P}}_0(y)\cdot{\boldsymbol{\nu}}_y\,dS_y\,dx\leq \frac{C}{\sqrt{\e}},
\]
by Lemma \ref{rootlem}.
\qed

\section{Acknowledgments}
The authors would like to thank Oleg Lavrentovich, Dan Spirn and Dean Louizos for helpful discussions. D.G. acknowledges  support by an NSF grant DMS 2106551. The research of P.S. was supported by a Simons Collaboration grant 585520 and an NSF grant DMS 2106516.

\bibliographystyle{ieeetr}
\bibliography{Newbib}
\end{document}